 \numberwithin{equation}{section}
\theoremstyle{plain}
\newtheorem{thm}{Theorem}[section]
\newtheorem{cor}[thm]{Corollary}
\newtheorem{lem}[thm]{Lemma}
\newtheorem{prop}[thm]{Proposition}
\newtheorem*{thm*}{Theorem}
\newtheorem*{cor*}{Corollary}
\newtheorem*{lem*}{Lemma}
\newtheorem*{defn*}{Definition}
\newtheorem*{rem*}{Remark}
\theoremstyle{definition}
\newtheorem{defn}[thm]{Definition}
\theoremstyle{remark}
\newtheorem{rem}[thm]{Remark}
\newcommand{\N}{\mathbb{N}}
\newcommand{\R}{\mathbb{R}}
\newcommand{\bp}{\begin{proof}[\ensuremath{\mathbf{Proof}}]}
\newcommand{\bs}{\begin{proof}[\ensuremath{\mathbf{Solution}}]}
\newcommand{\ep}{\end{proof}}
\newcommand{\be}{\begin{equation}}
\newcommand{\ee}{\end{equation}}
\begin{document}

\title{Sticky particles and the pressureless Euler equations \\ in one spatial dimension}

\author{Ryan Hynd\footnote{Department of Mathematics, University of Pennsylvania.  Partially supported by NSF grant DMS-1554130.}}

\maketitle

\begin{abstract}
We consider the dynamics of finite systems of point masses which move along the real line. 
We suppose the particles interact pairwise and undergo perfectly inelastic collisions when they collide.  In particular, once particles 
collide, they remain stuck together thereafter. Our main result is that if the interaction potential is semi-convex, this sticky particle property can quantified and is preserved upon letting the number of particles tend to infinity.  This is used to show that solutions of the pressureless Euler equations exist for given initial conditions and satisfy an entropy inequality. 
\end{abstract}







\section{Introduction}
In this paper, we will study solutions of the {\it pressureless Euler equations} in one spatial dimension. This is a system of partial differential equations comprised of the {\it conservation of mass}
\be\label{EP1}
\partial_t\rho +\partial_x(\rho v)=0
\ee
and the {\it conservation of momentum}
\be\label{EP2}
\partial_t(\rho v) +\partial_x(\rho v^2)=-\rho(W'*\rho).
\ee
Both equations hold in $\R\times (0,\infty)$.  This system governs the dynamics of collections of particles in one dimension whose pairwise interaction is determined by the potential $W$;  these particles also may collide and they undergo perfectly inelastic collisions when they do.  The unknowns are the density of particles $\rho$ and an associated local velocity field $v$. Our main objective in this paper is to establish the existence of solutions for given initial conditions.

\par We will suppose throughout this paper that $W: \R\rightarrow \R$ is continuously differentiable and even 
$$
W(-x)=W(x),\quad x\in \R.
$$
We note that $W$ convex in \eqref{EP2} corresponds to particles interacting via an attractive pairwise force and $W$ concave is associated with repulsive interaction.  The principal assumption made in this work is that $W$ is {\it semiconvex}. That is, there is $c>0$ such that 
\be\label{Wsemiconvex}
x\mapsto W(x)+\frac{c}{2}x^2\quad \text{is convex}.
\ee
In particular, we will study some types of interactions which are attractive and some which are repulsive.

\par  In view of the conservation of mass \eqref{EP1}, it will be natural for us to consider mass densities $\rho$ as mappings with values in the space ${\cal P}(\R)$ of Borel probability measures on $\R$.  Recall that this space has a natural topology: $(\mu^k)_{k\in \N}\subset {\cal P}(\R)$ converges to $\mu\in {\cal P}(\R)$ {\it narrowly} if 
$$
\lim_{k\rightarrow\infty}\int_{\R}gd\mu^k=\int_{\R}gd\mu
$$
for any continuous and bounded function $g:\R\rightarrow \R$.  Moreover, examples below will show that local velocities $v$ will typically be discontinuous.  However, we do expect local velocities to have reasonable integrability properties. These ideas motivate the following definition of a weak solution pair of the pressureless Euler equations.

\begin{defn}
Suppose $\rho_0\in {\cal P}(\R)$ and $v_0:\R\rightarrow \R$ is continuous. A narrowly continuous $\rho: (0,\infty)\rightarrow {\cal P}(\R); t\mapsto \rho_t$ and Borel 
$v:\R\times(0,\infty)\rightarrow\R$ is a {\it weak solution pair of the pressureless Euler equations}
which satisfies the initial conditions 
\be\label{Init}
\rho|_{t=0}=\rho_0\quad \text{and}\quad v|_{t=0}=v_0
\ee
if 
\be\label{EP1weak}
\int^\infty_0\int_{\R}(\partial_t\phi+v\partial_x\phi)d\rho_tdt+\int_{\R}\phi(\cdot,0)d\rho_0=0
\ee
and 
\be\label{EP2weak}
\int^\infty_0\int_{\R}(v\partial_t\phi +v^2\partial_x\phi)d\rho_tdt+\int_{\R}\phi(\cdot,0)v_0d\rho_0=\int^\infty_0\int_{\R}\phi(W'*\rho_t)d\rho_tdt
\ee
for each $\phi\in C^\infty_c(\R\times[0,\infty))$.
\end{defn}
\begin{rem}
Conditions \eqref{EP1weak} and \eqref{EP2weak} are weak formulations of \eqref{EP1} and \eqref{EP2}, respectively.
\end{rem}

We will construct weak solution pairs using finite particle systems. That is, we will study systems of particles with masses $m_1,\dots, m_N$ and respective trajectories $\gamma_1,\dots, \gamma_N:[0,\infty)\rightarrow \R$ that evolve in time according to Newton's second law 
\be\label{NewtonSystem}
\ddot \gamma_i(t)=-\sum^N_{j=1}m_jW'(\gamma_i(t)-\gamma_j(t)).
\ee
This system of ODE will hold at each time where there is not a collision.  When particles do collide, they experience perfectly inelastic collisions.   For example, if the subcollection of particles with masses $m_1,\dots, m_k$ collide at time $s>0$, then 
$$
m_1\dot\gamma_1(s-)+\dots +m_k\dot\gamma_k(s-)=(m_1+\dots +m_k)\dot\gamma_i(s+).
$$
for $i=1,\dots, k$. See Figure \ref{fourMass} for a schematic diagram when $k=4$.

\begin{figure}[h]
\centering
 \includegraphics[width=.75\textwidth]{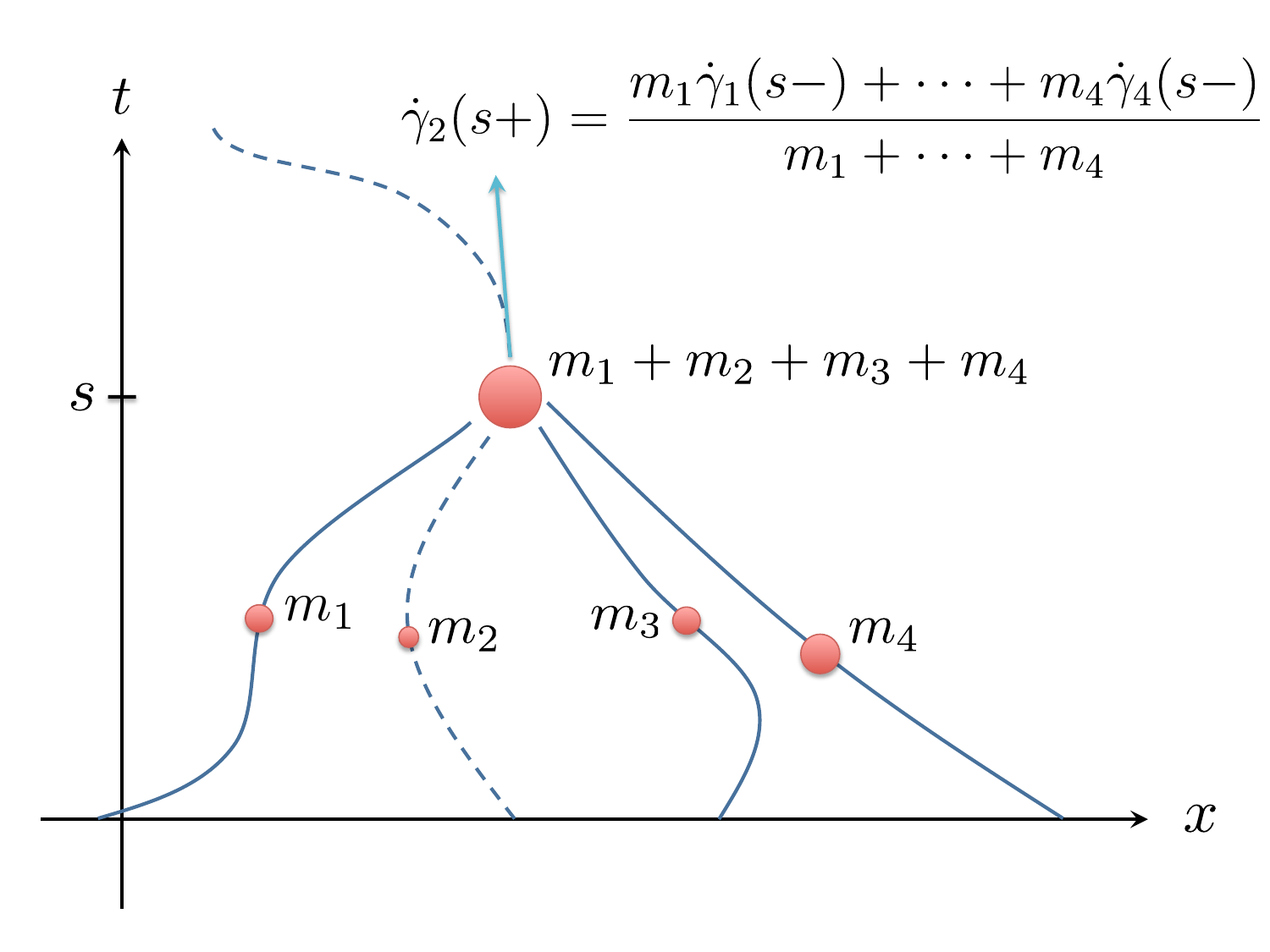}
 \caption{Point masses $m_1, m_2,m_3,$ and $m_4$ undergo a perfectly inelastic collision at time $s$. These masses are displayed larger than points to emphasize that they are allowed to be distinct.   Note in particular that the trajectories coincide after time $s$ and that the right hand limit of their slopes at time $s$ also agree. Trajectory $\gamma_2$ is shown in dashed. }\label{fourMass}
\end{figure}

\par When $\sum^N_{i=1}m_i=1$, we can define a probability measure 
$$
\rho_t:=\sum^N_{i=1}m_i\delta_{\gamma_i(t)}\in {\cal P}(\R)
$$
which represents the density of particles at time $t>0$. We can also choose a Borel function $v:\R\times(0,\infty)\rightarrow \R$ that satisfies 
$$
v(x,t)=\dot\gamma_i(t+)
$$
whenever $x=\gamma_i(t)$.  Here $v$ is a local velocity field associated to particle trajectories.  It turns out that $\rho$ and $v$ indeed comprise a weak solution pair of the pressureless Euler equations.

\par These particle trajectories satisfy 
$$
\gamma_i(s)=\gamma_j(s)\Longrightarrow \gamma_i(t)=\gamma_j(t).
$$
for $i,j=1,\dots,N$ and $s\le t$. We will actually establish the stronger {\it quantitative sticky particle property}: for $i,j=1,\dots,N$ and $0<s\le t$
\be\label{QSPP}
\frac{|\gamma_i(t)-\gamma_j(t)|}{\sinh(\sqrt{c}t)}
\le \frac{|\gamma_i(s)-\gamma_j(s)|}{\sinh(\sqrt{c}s)}.
\ee
Here $c$ is the constant in \eqref{Wsemiconvex}.  Combining \eqref{QSPP} with energy estimates derived using the semiconvexity of $W$, we will show that  the collection solutions obtained via finite particle systems are compact in a certain sense.

\par To this end, we shall assume for mathematical convenience that  
\be\label{secondMoment}
\int_{\R}x^2d\rho_0(x)<\infty
\ee
and
\be\label{Wprime}
\sup_{x\in \R}\frac{|W'(x)|}{1+|x|}<\infty.
\ee
Our main theorem is as follows.

\begin{thm}\label{mainThm} Assume $\rho_0\in {\cal P}(\R)$ satisfies \eqref{secondMoment}, $v_0:\R\rightarrow\R$ is absolutely continuous,  $W:\R\rightarrow\R$ satisfies \eqref{Wsemiconvex} for some $c>0$ and \eqref{Wprime}. There is a weak solution pair $\rho$ and $v$ of the pressureless Euler system which satisfies the initial conditions \eqref{Init}. Moreover, 
\begin{align}\label{EnergyIneq}
&\frac{1}{2}\int_\R v(x,t)^2d\rho_t(x)+\frac{1}{2}\int_\R\int_\R W(x-y)d\rho_t(x)d\rho_t(y)\\
&\hspace{1in}\le \frac{1}{2}\int_\R v(x,s)^2d\rho_s(x)+\frac{1}{2}\int_\R\int_\R W(x-y)d\rho_s(x)d\rho_s(y)
\end{align}
for almost every $0\le s\le t$; and
\be\label{entropy}
(v(x,t)-v(y,t))(x-y)\le\frac{\sqrt{c}}{\tanh(\sqrt{c}t)}(x-y)^2
\ee
for $\rho_t$ almost every $x,y\in \R$ and almost every $t>0$. 
\end{thm}

\par We note that the Euler-Poisson equations arise in a one-dimensional version of a model used by Zel'dovich \cite{Gurbatov, Zeldovich} to study the formation of large scale structures in the universe. The existence of the corresponding solution pairs for $W(x)=|x|$ was first established by E, Rykov and Sinai \cite{ERykovSinai} using a generalized variational principle.  More recently, Brenier, Gangbo, Savar\'e and Westdickenberg \cite{BreGan} conducted a general study of pressureless Euler models with attractive and repulsive interactions; in particular, they recast the pressureless Euler equations in Lagrangian coordinates and derived differential inclusions for the associated flow map. 

\par Similar approaches were used by Nguyen and Tudorascu \cite{NguTud} on the Euler-Poisson system and by Brenier and Grenier \cite{BreGre}, Natile and Savar\'e \cite{NatSav} and Cavalletti, Sedjro, and Westdickenberg \cite{MR3296602} for the sticky particle system ($ W\equiv 0$ in \eqref{EP2}). We also note that Gangbo, Nguyen, and Tudorasco have also studied the existence of solutions by exploiting the variational structure of the Euler-Poisson equations \cite{GNT}. In addition, there have been recent works on the pressureless Euler system in one spatial dimension involving the absence of shocks \cite{Guo}, hydrodynamic limits \cite{Jabin}, and entropy solutions in the presence of friction, dissipation and viscosity \cite{Shen, Jin, MR3359159}.

\par The outline of this paper is as follows.  In section \ref{StickyParticleSect}, we use the solutions of \eqref{NewtonSystem} to design sticky particle trajectories $\gamma_1,\dots,\gamma_N$ as mentioned above.  In section \ref{PathSpaceSect}, we recast weak solution pairs of \eqref{EP1} and \eqref{EP2} as probability measures on the space of continuous paths; see also \cite{Hynd} for how we treated the particular case $ W\equiv 0$. Then in section \ref{CompactSect}, we prove Theorem \ref{mainThm}. Finally, we would like to express our gratitude to Sean Paul for inquiring for a good reason as to why the entropy inequality 
\be
(v(x,t)-v(y,t))(x-y)\le\frac{1}{t}(x-y)^2
\ee
holds when $W\equiv 0$.  In trying to answer to his question, we were lead to the quantitative sticky particle property \eqref{QSPP} and subsequently to the entropy inequality \eqref{entropy} and Theorem \ref{mainThm}.

\section{Sticky particle trajectories}\label{StickyParticleSect}
In this section, we will study sticky particle trajectories $\gamma_1,\dots,\gamma_N$ as described in the introduction.  
We will show they satisfy the quantitative sticky particle property \eqref{QSPP} and also that they have important averaging 
property. This averaging property is then used to show that $\gamma_1,\dots,\gamma_N$ corresponds to a weak solution pair of 
the pressureless Euler equations.  We begin by showing these paths exist.

\begin{prop}\label{StickyParticlesExist}
Suppose $m_1,\dots, m_N>0$ with $\sum^N_{i=1}m_i=1$, $x_1\dots,x_N\in \R$ and $v_1,\dots, v_N\in \R$.   There are piecewise 
$C^2$ paths 
$$
\gamma_1,\dots,\gamma_N : [0,\infty)\rightarrow \R
$$
with the following properties. \\
(i) For $i=1,\dots, N$ and all but finitely many $t\in (0,\infty)$,
\be\label{gammaODEt}
\ddot \gamma_i(t)=-\sum^N_{j=1}m_jW'(\gamma_i(t)-\gamma_j(t)).
\ee
(ii) For $i=1,\dots, N$,
\be\label{NewtonSystemInit2}
\gamma_i(0)=x_i\quad \text{and}\quad \dot\gamma_i(0)=v_i.
\ee
(iii) For $i,j=1,\dots, N$, $0\le s\le t$ and $\gamma_i(s)=\gamma_j(s)$ imply 
$$
\gamma_i(t)=\gamma_j(t).
$$
(iv) If $t>0$, $\{i_1,\dots, i_k\}\subset\{1,\dots, N\}$, and
$$
\gamma_{i_1}(t)=\dots=\gamma_{i_k}(t)\neq \gamma_i(t)
$$
for $i\not\in\{i_1,\dots, i_k\}$, then
$$
\dot\gamma_{i_j}(t+)=\frac{m_{i_1}\dot\gamma_{i_1}(t-)+\dots+m_{i_k}\dot\gamma_{i_k}(t-)}{m_{i_1}+\dots+m_{i_k}}
$$
for $j=1,\dots, k$.
\end{prop}

\begin{proof}
We will prove the assertion by induction on $N$. Suppose $N=2$ and let $\overline\gamma_1,\overline\gamma_2$ be the solution of \eqref{gammaODEt} that satisfies \eqref{NewtonSystemInit2}; such solutions exist by Proposition \ref{ExistenceODE} in the appendix. If the trajectories $\overline\gamma_1$ and $\overline\gamma_1$ do not intersect, we take $\gamma_1=\overline\gamma_1$ and $\gamma_2=\overline\gamma_2$. Otherwise, let $s>0$ be the first time such that $z:=\overline\gamma_1(s)=\overline\gamma_2(s)$. 
We then set 
$$
\gamma_i(t):=
\begin{cases}
\overline\gamma_i(t), \quad & t\in [0,s]\\
z+(t-s)(m_1\dot{\overline\gamma_1}(s-)+m_2\dot{\overline\gamma}_2(s-)), \quad & t\in [s,\infty)
\end{cases}
$$
for $i=1,2$.  It is easily verified that $\gamma_1, \gamma_2$ satisfy $(i)-(iv)$ above. We conclude that the assertion holds for $N=2$. 

\par Now suppose the claim has been established for some $N\ge 2$.  Let $m_1,\dots, m_{N+1}>0$ with $\sum_{i}m_i=1$, $x_1\dots,x_{N+1}\in \R$, $v_1,\dots, v_{N+1}\in \R$ and assume $\overline\gamma_1,\dots, \overline\gamma_{N+1}$ is a corresponding solution of \eqref{gammaODEt} that satisfies the initial conditions \eqref{NewtonSystemInit2} (Proposition \ref{ExistenceODE}). If these trajectories never intersect, we take 
$\gamma_i=\overline\gamma_i$ for $i=1,\dots, N+1$ and conclude. Otherwise, let $s>0$ be the first time that  trajectories intersect.  First, we will assume that at time $s$ a single subcollection of trajectories $\overline\gamma_{i_1},\dots, \overline\gamma_{i_k}$ intersect. That is, 
$$
z:=\overline\gamma_{i_1}(s)=\dots=\overline\gamma_{i_k}(s)\neq \overline\gamma_i(s)
$$
for $i\not\in\{i_1,\dots, i_k\}$.   We also define  
$$
v:=\frac{m_{i_1}\dot{\overline\gamma}_{i_1}(s-)+\dots+m_{i_k}\dot{\overline\gamma}_{i_k}(s-)}{m_{i_1}+\dots+m_{i_k}}.
$$

\par By induction, there are trajectories $\{\xi_i\}_{i\neq i_j}$ and $\xi$ corresponding to the $N+1-(k-1)$ masses $\{m_i\}_{i\neq i_j}$ and $m_{i_1}+\dots +m_{i_k}$, initial positions 
$\{\overline\gamma_i(s)\}_{i\neq i_j}$ and $z$, and velocities $\{\dot{\overline\gamma_i}(s)\}_{i\neq i_j}$ and $v$ which satisfy $(i)-(iv)$ above.  We then set 
$$
\gamma_i(t):=
\begin{cases}
\overline\gamma_i(t), \quad & t\in [0,s]\\
\xi_i(t-s), \quad & t\in [s,\infty)
\end{cases}
$$
for $i\neq i_j$ and 
$$
\gamma_{i_j}(t):=
\begin{cases}
\overline\gamma_{i_j}(t), \quad &  t\in [0,s]\\
\xi(t-s), \quad & t\in [s,\infty).
\end{cases}
$$
Using the induction hypothesis, it is now routine the check that $\gamma_1,\dots,\gamma_{N+1}$ satisfy $(i)-(iv)$ in the statement of this claim. It also not difficult to see how the argument given above can be extended to the case where there are more than one subcollection of paths that intersect at time $s$.  We leave the details to the reader and conclude this assertion. 
\end{proof}
\begin{rem}\label{InterchangeDerandLimit}
The right $\dot\gamma_i(t+)$ and left $\dot\gamma_i(t-)$ limits of $\dot\gamma_i$ exist for each $t>0$ and $i=1,\dots, N$. Moreover,  
these limits can be computed as
$$
\dot\gamma_i(t\pm)=\lim_{h\rightarrow 0^\pm}\frac{\gamma_i(t+h)-\gamma_i(t)}{h}.
$$
These remarks follow from our proof of Proposition \ref{StickyParticlesExist}; they also can be established by appealing directly to property $(i)$ of the proposition. 
\end{rem}
\begin{figure}[h]
\centering
 \includegraphics[width=.75\textwidth]{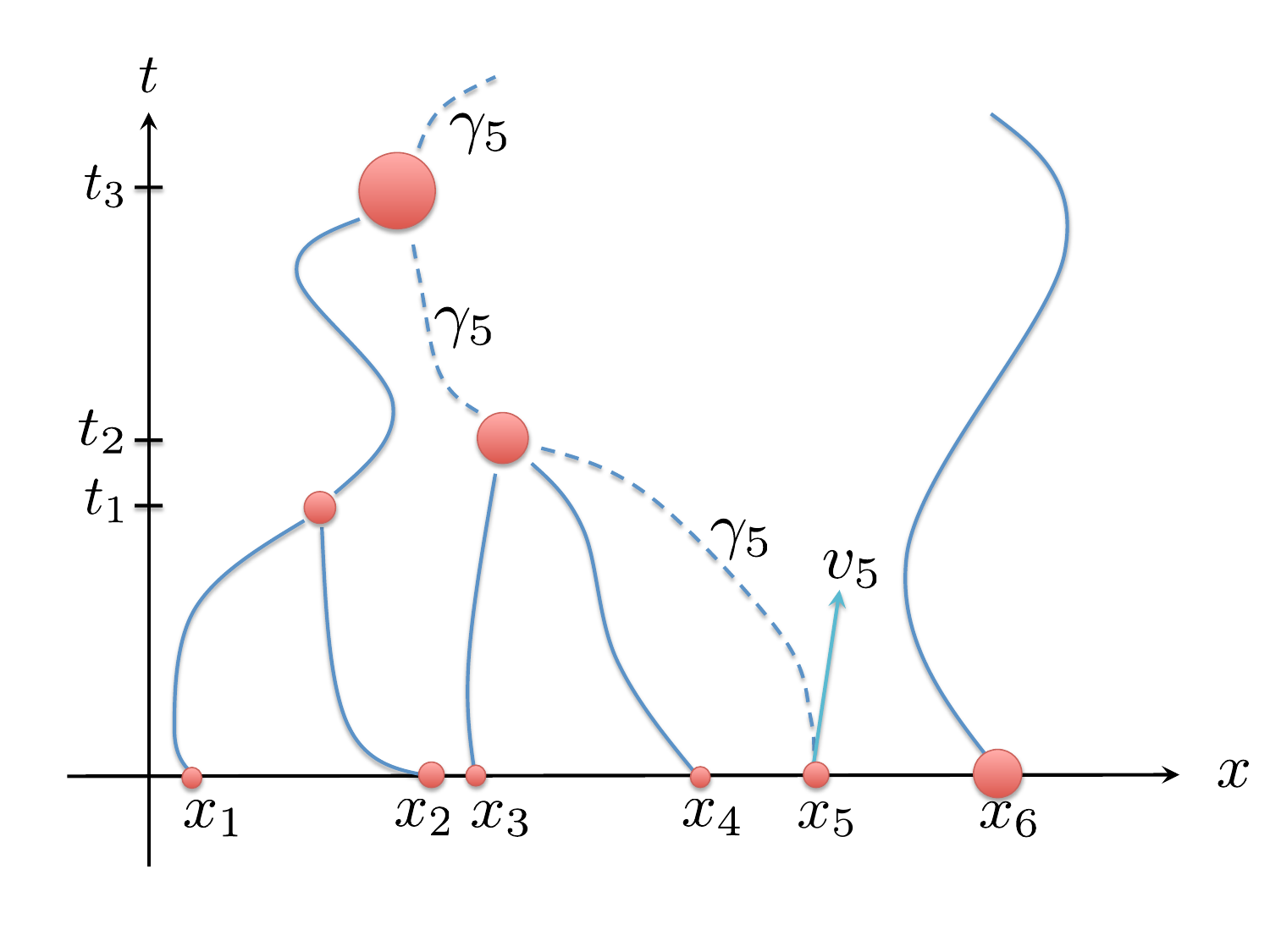}
 \caption{Sticky particle trajectories when $N=6$; here the initial positions $x_1,\dots, x_6$ are displayed along 
 with the initial velocity $v_5$ of the trajectory starting out at $x_5$. The first intersection times $t_1<t_2<t_3$ are also shown along the time axis and trajectory $\gamma_5$ is exhibited in dashed.}\label{sixTraj}
\end{figure}

\begin{defn}
The paths $\gamma_1,\dots,\gamma_{N}$ shown to exist in Proposition \ref{StickyParticlesExist} are called {\it sticky particle trajectories} corresponding to the masses $m_1,\dots, m_N$ (with $\sum_i m_i=1$), initial positions $x_1,\dots, x_N$, and initial velocities $v_1,\dots, v_N$. We also call $t>0$ a {\it first intersection time} whenever there are at least two paths $\gamma_i$ and $\gamma_j$ that agree for the first time at $t$.  See Figure \ref{sixTraj}.
\end{defn}

\subsection{Quantitative sticky particle property and stability}
For the remainder of this section, we will consider a single collection of sticky particle trajectories $\gamma_1,\dots,\gamma_{N}$  corresponding to a fixed but arbitrary collection of masses $m_1,\dots, m_N$ (with $\sum_i m_i=1$),  initial positions $x_1,\dots, x_N$, and initial velocities
$$
v_i:=v_0(x_i)
$$
where $v_0: \R\rightarrow \R$ is absolutely continuous. We will show they satisfy a quantitative version of the property $(iii)$ in Proposition \ref{StickyParticlesExist} and verify a stability property of these paths. First, we will need an elementary lemma.

\begin{lem}\label{ODEineqLemma}
Suppose $T>0$ and $y:[0,T)\rightarrow [0, \infty)$ is continuous and piecewise $C^2$. Further assume 
\be\label{slopeDecreaseY}
\dot y(t+)\le \dot y(t-)
\ee
for each $t\in (0,T)$ and that for some $c>0$
\be\label{YdoubleprimelessthanY}
\ddot y(t)\le c y(t)
\ee
for all but finitely many $t\in (0,T)$. Then $(i)$
\be
(0,T)\ni t\mapsto \frac{y(t)}{\sinh(\sqrt{c}t)}\;\; \text{is nonincreasing}
\ee
and $(ii)$
\be
y(t)\le \cosh(\sqrt{c}t)y(0)+\frac{1}{\sqrt{c}}\sinh(\sqrt{c}t)\dot y(0+),\;\;  t\in [0,T). 
\ee

\end{lem}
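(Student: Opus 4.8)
The plan is to compare $y$ with the function $\phi(t):=\sinh(\sqrt{c}\,t)$, which solves $\ddot\phi=c\phi$ on $(0,\infty)$ with $\phi(0)=0$ and $\dot\phi(0)=\sqrt{c}$, and which is strictly positive on $(0,T)$. On any subinterval where $y$ is $C^1$ we have
$$
\left(\frac{y}{\phi}\right)'(t)=\frac{\dot y(t)\phi(t)-y(t)\dot\phi(t)}{\phi(t)^2},
$$
so the whole statement reduces to controlling the sign of the Wronskian-type quantity $W(t):=\dot y(t)\phi(t)-y(t)\dot\phi(t)$ on $(0,T)$. First I would show $W$ is nonincreasing on $(0,T)$: on each interval where $y$ is $C^2$ and \eqref{YdoubleprimelessthanY} holds, a direct computation using $\ddot\phi=c\phi$ gives $W'=(\ddot y-cy)\phi\le 0$, since $\phi\ge 0$; and at each of the finitely many exceptional points $t_0\in(0,T)$ (the breakpoints of $y$ together with the points where \eqref{YdoubleprimelessthanY} may fail), continuity of $y$ and $\phi$ yields $W(t_0+)-W(t_0-)=\bigl(\dot y(t_0+)-\dot y(t_0-)\bigr)\phi(t_0)\le 0$ by \eqref{slopeDecreaseY}. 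Thus $W$ decreases across smooth pieces and jumps downward at breakpoints, hence is nonincreasing on $(0,T)$.

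Next I would pin down the sign of $W$ by examining the left endpoint. Since $y$ is piecewise $C^2$, the one-sided limit $\dot y(0+)$ exists, and because $\phi(0)=0$, $\dot\phi(0)=\sqrt{c}$, we get $\lim_{t\to 0^+}W(t)=-\sqrt{c}\,y(0)\le 0$, using here that $y$ takes values in $[0,\infty)$. As $W$ is nonincreasing on $(0,T)$, it follows that $W(t)\le 0$ for every $t\in(0,T)$. Therefore $\left(y/\phi\right)'=W/\phi^2\le 0$ on each interval of smoothness; since $y/\phi$ is continuous on $(0,T)$ (as $y$ is continuous and $\phi$ is continuous and positive there), being nonincreasing on each piece of a finite partition forces it to be nonincreasing on all of $(0,T)$, which is the claim.

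The argument is essentially routine once one hits on the substitution $y/\sinh(\sqrt{c}\,t)$; the only points requiring care are bookkeeping, not difficulty. Specifically: (a) verifying that the jump of $W$ at a breakpoint has the correct sign, which is exactly where hypothesis \eqref{slopeDecreaseY} is used and where one needs $\phi(t_0)\ge 0$; and (b) the behavior as $t\to 0^+$, where $\phi$ vanishes — this is the step that both forces us to restrict the conclusion to the open interval $(0,T)$ and is the only place the sign hypothesis $y\ge 0$ enters (through $y(0)\ge 0$). I would flag (b) as the subtle point to state carefully.
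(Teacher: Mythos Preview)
Your proposal is correct and is essentially the paper's argument in a slightly different packaging. The paper scales to $c=1$, writes $y=u\sinh$ with $u=y/\sinh$, and shows that $\dot u(t)\sinh(t)^2$ is nonincreasing and has nonpositive $\limsup$ at $0^+$; but $\dot u(t)\sinh(t)^2=\dot y(t)\sinh(t)-y(t)\cosh(t)$ is exactly your Wronskian $W(t)=\dot y\,\phi-y\,\dot\phi$, so the two proofs track the same quantity and use the hypotheses at the same places (jumps handled by \eqref{slopeDecreaseY}, behavior at $0^+$ handled by $y\ge 0$).
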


\begin{proof} $(i)$ Without any loss of generality, we assume $c=1$ in this proof. We will also suppose $0<t_0<\dots < t_n<T$ are times such that $y$ is $C^2$ on each of the intervals 
$(0,t_1),\dots (t_n,T)$.   It then suffices to show the continuous function
$$
u(t):=\frac{y(t)}{\sinh(t)}, \quad t\in (0,T)
$$
is nonincreasing on each of these intervals. 

\par Observe 
\begin{align*}
\dot u(t+)&=\frac{\dot y(t+)}{\sinh(t)}-\frac{y(t)}{\sinh(t)^2}\cosh(t)\\
&\le \frac{\dot y(t-)}{\sinh(t)}-\frac{y(t)}{\sinh(t)^2}\cosh(t)\\
&=\dot u(t-)
\end{align*}
for each $t>0$ by \eqref{slopeDecreaseY}. Also note 
\begin{align*}
\ddot y(t) & = \ddot u(t)\sinh(t)+2\dot u(t)\cosh(t)+u(t)\sinh(t)\\
&=\ddot u(t)\sinh(t)+2\dot u(t)\cosh(t)+y(t)\\
&\le y(t)
\end{align*}
for $t\in (0,T)\setminus\{t_1,\dots, t_n\}$. Consequently, 
\be\label{UdotLessThan}
\frac{d}{dt}\left(\dot u(t)\sinh(t)^2\right)=\sinh(t)\left(\ddot u(t)\sinh(t)+2\dot u(t)\cosh(t)\right)\le 0
\ee
for $t\in (0,T)\setminus\{t_1,\dots, t_n\}$. 

\par 
As $y$ is nonnegative, 
$$
\dot u(t)\le \frac{\dot y(t)}{\sinh(t)}.
$$
for $t\in (0,T)\setminus\{t_1,\dots, t_n\}$.  As a result, 
$$
\limsup_{s\rightarrow 0^+}\left\{\dot u(s)\sinh(s)^2\right\}\le \limsup_{s\rightarrow 0^+}\left\{\dot y(s)\sinh(s)\right\}=\dot y(0+)\sinh(0)=0.
$$
In view of \eqref{UdotLessThan}
$$
\dot u(t)\sinh(t)^2\le \limsup_{s\rightarrow 0^+}\left\{\dot u(s)\sinh(s)^2\right\}=0
$$
for $t\in (0,t_1)$; we emphasize that this inequality is valid since $\dot u$ is $C^1$ on $(0,t_1)$ which allows us to integrate the left hand side of \eqref{UdotLessThan} on this interval.  Therefore, $\dot u(t)\le 0$ for $t\in (0,t_1)$. 

\par So far, we have that $u$ is nonincreasing on $t\in (0,t_1]$ and
$$
\dot u(t_1-)\le 0.
$$
In addition, \eqref{UdotLessThan} gives
\begin{align*}
\dot u(t)\sinh(t)^2&\le \dot u(t_1+)\sinh(t_1)^2\\
&\le \dot u(t_1-)\sinh(t_1)^2\\
&\le 0
\end{align*}
for $t\in (t_1,t_2)$.  Thus, $u$ is nonincreasing on $[t_1,t_2]$ and 
$$
\dot u(t_2-)\le 0.
$$
It is now evident that we may repeat this argument to show that $u$ is nonincreasing on $[t_2,t_3], [t_3,t_4],\dots, [t_n,T)$ and 
therefore on $(0,T)$.

\par Part $(ii)$ of this lemma follows similarly. We will omit the analogous argument this claim has been established in Lemma 3.7 of \cite{Hynd2}. 
\end{proof}

\par We now verify the following quantitative sticky particle property and stability estimate of sticky particle trajectories. We recall that $W(x)+(c/2)x^2$ is convex for some $c>0$.
\begin{prop}\label{PropQSPP}
Suppose $i,j\in\{1,\dots,N\}$. \\
$(i)$ For $0<s\le t$,
\be
\frac{|\gamma_i(t)-\gamma_j(t)|}{\sinh(\sqrt{c}t)}
\le \frac{|\gamma_i(s)-\gamma_j(s)|}{\sinh(\sqrt{c}s)}.
\ee
$(ii)$ If $x_i\ge x_j$, then 
\be\label{timeZeroEst}
0\le \gamma_i(t)-\gamma_j(t)\le \cosh(\sqrt{c}t)(x_i-x_j)+\frac{1}{\sqrt{c}}\sinh(\sqrt{c}t)\int^{x_i}_{x_j}|v_0'(x)|dx
\ee
for all $t\ge 0$.  
\end{prop}
\begin{proof}
Without any loss of generality, we may assume $\gamma_1\le\dots\le \gamma_N$. It then suffices to prove this proposition for $i=1,\dots, N-1$ and $j=i+1$.  Indeed, if
\be\label{neighborsGammaiplusone}
\frac{\gamma_{i+1}(t)-\gamma_i(t)}{\sinh(\sqrt{c}t)}\quad \text{is nonincreasing}
\ee
for $i=1,\dots, N-1$, then
\be
\frac{\gamma_k(t)-\gamma_j(t)}{\sinh(\sqrt{c}t)}
=\sum^{k-1}_{i=j}\frac{\gamma_{i+1}(t)-\gamma_i(t)}{\sinh(\sqrt{c}t)}
\ee
is a sum of nonincreasing functions for $k>j$ which would prove assertion $(i)$. 

Likewise, for assertion $(ii)$, it suffices to verify 
\be\label{timeZeroEstiplusone}
\gamma_{i+1}(t)-\gamma_i(t)\le \cosh(\sqrt{c}t)(x_{i+1}-x_i)+\frac{1}{\sqrt{c}}\sinh(\sqrt{c}t)|v_0(x_{i+1})-v_0(x_i)|
\ee
for $t\ge 0$. In this case, 
\begin{align}
\gamma_k(t)-\gamma_j(t)&= \sum^{k-1}_{i=j}(\gamma_{i+1}(t)-\gamma_i(t))\\
&\le \sum^{k-1}_{i=j}\left(\cosh(\sqrt{c}t)(x_{i+1}-x_i)+\frac{1}{\sqrt{c}}\sinh(\sqrt{c}t)
|v_0(x_{i+1})-v_0(x_i)|\right)\\
&\le  \cosh(\sqrt{c}t)(x_k-x_j)+\frac{1}{\sqrt{c}}\sinh(\sqrt{c}t)\sum^{k-1}_{i=j}\int^{x_{i+1}}_{x_i}|v_0'(x)|dx\\
&= \cosh(\sqrt{c}t)(x_k-x_j)+\frac{1}{\sqrt{c}}\sinh(\sqrt{c}t)\int^{x_{k}}_{x_j}|v_0'(x)|dx.
\end{align}
 
\par Consequently, we will fix $i \in \{1,\dots, N-1\}$ and focus on establishing \eqref{neighborsGammaiplusone} and 
\eqref{timeZeroEstiplusone}.   Our plan is to verify the hypotheses of Lemma \ref{ODEineqLemma} with 
$$
y(t):=\gamma_{i+1}(t)-\gamma_i(t), \quad t\in [0,T),
$$
where
$$
T:=\inf\{t> 0: \gamma_{i+1}(t)=\gamma_i(t)\}.
$$
Of course if $\gamma_{i+1}(t)>\gamma_i(t)$ for all $t>0$, then $T=+\infty$. Otherwise, we have
$$
\gamma_{i+1}(t)-\gamma_i(t)=0
$$
for $t\ge T$. In either case, it is enough to prove  \eqref{neighborsGammaiplusone} and \eqref{timeZeroEstiplusone} on $[0,T)$.  To this end, we will first show
\be\label{DotGmamaiplusone}
\dot\gamma_{i+1}(s+)\le \dot\gamma_{i+1}(s-)
\ee
and 
\be\label{DotGmamajusti}
\dot\gamma_{i}(s+)\ge \dot\gamma_{i}(s-)
\ee
for each $s\in (0,T)$. 

\par  We will only justify \eqref{DotGmamaiplusone} as \eqref{DotGmamajusti} can be proved similarly. Observe that if $\gamma_{i+1}$ does not have a first intersection time at $s\in (0,T)$, then $\gamma_{i+1}$ is $C^1$ in a neighborhood of $s$ and thus 
$$
\dot\gamma_{i+1}(s)=\dot\gamma_{i+1}(s+)=\dot\gamma_{i+1}(s-).
$$
If $\gamma_{i+1}$ has a first intersection time at $s\in (0,T)$, then there are trajectories $\gamma_{i+2}, \dots, \gamma_{i+r}$  (some $r\ge 2$) such that 
$$
\gamma_{i+1}(s)=\gamma_{i+2}(s)=\dots= \gamma_{i+r}(s)
$$
and 
\be\label{AveragingForQSPS}
\dot\gamma_{i+j}(s+)=\frac{m_{i+1}\dot\gamma_{i+1}(s-)+\dots + m_{i+r}\dot\gamma_{i+r}(s-)}{m_{i+1}+\dots+m_{i+r}}
\ee
$j=1,\dots, r$. 

\par Also note that as $\gamma_{i+1}\le \gamma_{i+j}$ for $j=2,\dots, r$,
$$
\frac{\gamma_{i+1}(s+h)- \gamma_{i+1}(s)}{h}\ge \frac{\gamma_{i+j}(s+h)- \gamma_{i+j}(s)}{h}
$$
for all $h<0$ and sufficiently small. By Remark \ref{InterchangeDerandLimit}, we can send $h\rightarrow 0^-$ to find
$$
\dot\gamma_{i+1}(s-)\ge \dot\gamma_{i+j}(s-)
$$
for $j=2,\dots, r$. It then follows from \eqref{AveragingForQSPS} that 
 $$
\dot \gamma_{i+1}(s+)\le\frac{m_{i+1}\dot\gamma_{i+1}(s-)+\dots + m_{i+r}\dot\gamma_{i+1}(s-)}{m_{i+1}+\dots+m_{i+r}}=\dot\gamma_{i+1}(s-),
 $$
  which is \eqref{DotGmamaiplusone}.  Putting  \eqref{DotGmamaiplusone} and  \eqref{DotGmamajusti} together gives 
   \be\label{dotYnotgoingup}
 \dot y(s+)=\dot\gamma_{i+1}(s+)-\dot\gamma_{i}(s+)\le \dot\gamma_{i+1}(s-)-\dot\gamma_{i}(s-)= \dot y(s-)
 \ee
for all $s\in (0,T)$. 
  
\par By part $(i)$ of Proposition \ref{StickyParticlesExist} and the semiconvexity of $W$,
\begin{align*}
\ddot y(t)&=\ddot\gamma_{i+1}(t)-\ddot\gamma_i(t)\\
&=-\sum_{j=1}^Nm_j\left(W'(\gamma_{i+1}(t)-\gamma_j(t))-W'(\gamma_i(t)-\gamma_j(t))\right) \\
&\le \sum_{j=1}^Nm_j c\left(\gamma_{i+1}(t)-\gamma_i(t)\right)\\
&= c(\gamma_{i+1}(t)-\gamma_i(t))\\
&=c y(t)
\end{align*}
for all but finitely many $t>0$.  Combining this observation with \eqref{dotYnotgoingup} allows us to apply Lemma \ref{ODEineqLemma} and conclude.
\end{proof}

\subsection{Averaging property}
We will now discuss the averaging property of the paths $\gamma_1,\dots, \gamma_N$. We shall see that it implies a statement about the conservation of momentum of collections of finitely many sticky particles.

\begin{prop}
Suppose $g:\R\rightarrow \R$ and $0\le s<t$. Then 
\begin{align}\label{AveragingProp}
&\sum^N_{i=1}m_ig(\gamma_i(t))\dot\gamma_i(t+)= \\
&\quad \sum^N_{i=1}m_ig(\gamma_i(t))\left[\dot\gamma_i(s+)-\int^t_s\left(\sum^N_{j=1}m_jW'(\gamma_i(\tau)-\gamma_j(\tau))\right)d\tau \right].
\end{align}
\end{prop}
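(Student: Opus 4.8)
The plan is to move everything to one side and show that the resulting quantity is, after a telescoping, a sum over the finitely many first intersection times in $(s,t]$ of terms that each vanish thanks to the perfectly inelastic collision rule. Throughout write
\[
F_i(\tau):=V'(\gamma_i(\tau))+\sum_{j=1}^N m_j W'(\gamma_i(\tau)-\gamma_j(\tau)),
\]
so that part $(i)$ of Proposition \ref{StickyParticlesExist} reads $\ddot\gamma_i(\tau)=-F_i(\tau)$ away from the first intersection times and \eqref{AveragingProp} is equivalent to
\[
\sum_{i=1}^N m_i\, g(\gamma_i(t))\left[\dot\gamma_i(t+)-\dot\gamma_i(s+)+\int_s^t F_i(\tau)\,d\tau\right]=0 .
\]

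First I would treat a single index $i$. Set $h_i(\tau):=\dot\gamma_i(\tau+)+\int_s^\tau F_i(r)\,dr$ for $\tau\in[s,t]$. By the construction in Proposition \ref{StickyParticlesExist} there are only finitely many first intersection times (each one strictly reduces the number of distinct trajectories, hence there are at most $N-1$ of them), and I list those in $(s,t]$ as $s<\sigma_1<\dots<\sigma_n\le t$. Between consecutive first intersection times $\gamma_i$ is $C^2$ with $\ddot\gamma_i=-F_i$, so $h_i$ is constant on each such interval; and Remark \ref{InterchangeDerandLimit}, which identifies the one-sided derivative limits of $\gamma_i$ with $\dot\gamma_i(\cdot\pm)$, shows that $h_i$ is right-continuous and has a jump of exactly $\dot\gamma_i(\sigma_k+)-\dot\gamma_i(\sigma_k-)$ at each $\sigma_k$ and no jump elsewhere. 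Telescoping across $[s,t]$ yields
\[
\dot\gamma_i(t+)-\dot\gamma_i(s+)+\int_s^t F_i(\tau)\,d\tau
= h_i(t)-h_i(s)
= \sum_{k=1}^n\bigl(\dot\gamma_i(\sigma_k+)-\dot\gamma_i(\sigma_k-)\bigr).
\]

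Next I would multiply this by $m_i\, g(\gamma_i(t))$, sum over $i=1,\dots,N$, and interchange the two finite sums; it then suffices to show that for each fixed $k$
\[
\sum_{i=1}^N m_i\, g(\gamma_i(t))\bigl(\dot\gamma_i(\sigma_k+)-\dot\gamma_i(\sigma_k-)\bigr)=0 .
\]
Fix $\sigma:=\sigma_k$. The indices of the particles colliding at $\sigma$ split into maximal blocks $C_1,\dots,C_p$, where each $C_q$ collects the indices sharing one common collision point; indices in no block have $\dot\gamma_i(\sigma+)=\dot\gamma_i(\sigma-)$ and drop out. For a block $C_q$, property $(iii)$ of Proposition \ref{StickyParticlesExist} (applied with collision time $\sigma\le t$) forces $\gamma_i(t)$ to be independent of $i\in C_q$, so $g(\gamma_i(t))$ factors out of the sum over $C_q$; and property $(iv)$ gives
\[
\dot\gamma_i(\sigma+)=\frac{\sum_{j\in C_q}m_j\dot\gamma_j(\sigma-)}{\sum_{j\in C_q}m_j}\qquad\text{for every }i\in C_q,
\]
so $\sum_{i\in C_q}m_i\dot\gamma_i(\sigma+)=\sum_{i\in C_q}m_i\dot\gamma_i(\sigma-)$ and the contribution of $C_q$ is zero. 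Summing over $q$ and then over $k$ gives the claim.

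The fiddly part is the bookkeeping in the middle paragraph --- verifying that $h_i$ is genuinely piecewise constant with precisely the stated jumps, i.e. matching $\lim_{\tau\to\sigma_k^{\pm}}\dot\gamma_i(\tau)$ with $\dot\gamma_i(\sigma_k\pm)$ via Remark \ref{InterchangeDerandLimit}, and recording the finiteness of the set of first intersection times. The real content, and the only place the hypotheses are used, is the last paragraph: the sticky property $(iii)$ is what lets $g(\gamma_i(t))$ be pulled out of each colliding block, while the inelastic collision law $(iv)$ is exactly what makes that block contribute nothing --- this is the conservation of momentum the proposition is meant to capture.
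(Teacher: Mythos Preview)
Your proof is correct and uses the same key ingredients as the paper's --- the ODE $\ddot\gamma_i=-F_i$ between collisions, the sticky property $(iii)$ to make $g(\gamma_i(t))$ constant on each colliding block, and the momentum conservation $(iv)$ to annihilate each block's contribution. The paper organizes this as a backward induction over the first intersection times $t_0<t_1<\dots<t_\ell$ (showing the identity at level $t_k$ implies it at level $t_{k-1}$), whereas you telescope directly and isolate the velocity jumps $\dot\gamma_i(\sigma_k+)-\dot\gamma_i(\sigma_k-)$ as the only obstruction; the two arguments are rearrangements of the same computation.
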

\begin{proof}
Let $t_0=0$ and $0<t_1<\dots<t_\ell$ denote the first intersection times of the paths $\gamma_1,\dots, \gamma_N$.  As these paths satisfy the ODE \eqref{gammaODEt} on $(0,\infty)\setminus\{t_1,\dots, t_\ell\}$, 
it suffices to verify 
\begin{align}\label{AveragingPropReduced}
&\sum^N_{i=1}m_ig(\gamma_i(t))\dot\gamma_i(t_r+)= \\
&\quad \sum^N_{i=1}m_ig(\gamma_i(t))\left[\dot\gamma_i(t_k+)-\int^{t_r}_{t_k}\left(\sum^N_{j=1}m_jW'(\gamma_i(\tau)-\gamma_j(\tau))\right)d\tau \right]
\end{align}
for each $t_k\le t_r$, where $t_r$ is the largest element of $\{t_0,t_1,\dots, t_\ell\}$ that is less than or equal to $t$.  We will establish the identity \eqref{AveragingPropReduced} by induction on $k\le r$.  Of course \eqref{AveragingPropReduced} is clear for $k=r$. So we assume it holds for some $k<r$ and
then show it holds for $k-1$. 

\par At time $t_k$ let us initially suppose that a single subcollection $\gamma_{i_1},\dots, \gamma_{i_n}$ of paths intersect for the first time. This implies
$$
\gamma_{i_1}(t_k)=\dots=\gamma_{i_n}(t_k)\neq \gamma_i(t_k)
$$
for each $i\not\in \{i_1,\dots, i_n\}$, $\overline{\gamma}(t):=\gamma_{i_1}(t)=\dots=\gamma_{i_n}(t)$ since $t\ge t_k$, and also that
\be\label{DerGammEyePeeEq}
\dot\gamma_{i_p}(t_k+)=\frac{m_{i_1}\dot\gamma_{i_1}(t_k-)+\dots +m_{i_n}\dot\gamma_{i_n}(t_k-)}{m_{i_1}+\dots +m_{i_n}}
\ee
$p=1,\dots, n$.   Furthermore, when  $i\not\in \{i_1,\dots, i_n\}$, $\gamma_i$ is continuously differentiable on $(t_{k-1},t_{k+1})$ if $k<\ell$ or on $(t_{k-1},\infty)$ if $k=\ell$. 

\par With these observations and the induction hypothesis, we have
\begin{align}
&\sum^N_{i=1}m_ig(\gamma_i(t))\dot\gamma_i(t_r+) \\
&=\quad \sum^N_{i=1}m_ig(\gamma_i(t))\left[\dot\gamma_i(t_k+)-\int^{t_r}_{t_k}\left(\sum^N_{j=1}m_jW'(\gamma_i(\tau)-\gamma_j(\tau))\right)d\tau \right]\\
&=\quad \sum_{i\not\in \{i_1,\dots, i_n\}}m_ig(\gamma_i(t))\left[\dot\gamma_i(t_k+)-\int^{t_r}_{t_k}\left(\sum^N_{j=1}m_jW'(\gamma_i(\tau)-\gamma_j(\tau))\right)d\tau \right]\\
&+\quad \sum^n_{p=1}m_{i_p}g(\gamma_{i_p}(t))\left[\dot\gamma_{i_p}(t_k+)-\int^{t_r}_{t_k}\left(\sum^N_{j=1}m_jW'(\gamma_{i_p}(\tau)-\gamma_j(\tau))\right)d\tau \right]\\
&=\quad \sum_{i\not\in \{i_1,\dots, i_n\}}m_ig(\gamma_i(t))\left[\dot\gamma_i(t_{k-1}+)-\int^{t_r}_{t_{k-1}}\left(\sum^N_{j=1}m_jW'(\gamma_i(\tau)-\gamma_j(\tau))\right)d\tau \right]\\
&+\quad \sum^n_{p=1}m_{i_p}g(\overline\gamma(t))\left[\dot\gamma_{i_p}(t_k+)-\int^{t_r}_{t_k}\left(\sum^N_{j=1}m_jW'(\overline\gamma(\tau)-\gamma_j(\tau))\right)d\tau \right]\\
&=\quad \sum_{i\not\in \{i_1,\dots, i_n\}}m_ig(\gamma_i(t))\left[\dot\gamma_i(t_{k-1}+)-\int^{t_r}_{t_{k-1}}\left(\sum^N_{j=1}m_jW'(\gamma_i(\tau)-\gamma_j(\tau))\right)d\tau \right]\\
&+\quad \sum^n_{p=1}m_{i_p}g(\overline\gamma(t))\left[\dot\gamma_{i_p}(t_k-)-\int^{t_r}_{t_k}\left(\sum^N_{j=1}m_jW'(\overline\gamma(\tau)-\gamma_j(\tau))\right)d\tau \right]\\
&=\quad \sum_{i\not\in \{i_1,\dots, i_n\}}m_ig(\gamma_i(t))\left[\dot\gamma_i(t_{k-1}+)-\int^{t_r}_{t_{k-1}}\left(\sum^N_{j=1}m_jW'(\gamma_i(\tau)-\gamma_j(\tau))\right)d\tau \right]\\
&+\quad \sum^n_{p=1}m_{i_p}g(\overline\gamma(t))\left[\dot\gamma_{i_p}(t_{k-1}+)-\int^{t_r}_{t_{k-1}}\left(\sum^N_{j=1}m_jW'(\overline\gamma(\tau)-\gamma_j(\tau))\right)d\tau \right]\\
&=\quad \sum^N_{i=1}m_ig(\gamma_i(t))\left[\dot\gamma_i(t_{k-1}+)-\int^{t_r}_{t_{k-1}}\left(\sum^N_{j=1}m_jW'(\gamma_i(\tau)-\gamma_j(\tau))\right)d\tau \right].
\end{align}
Note that we used \eqref{DerGammEyePeeEq} to derive the fourth equality above. Finally, we note that if more than one subcollection of trajectories intersect for the first time at $t_k$, we can argue as above on each subcollection to verify \eqref{AveragingPropReduced}. Therefore, the conclusion follows 
by induction. 
\end{proof}
\begin{rem}\label{AveragingRemark}
Choosing 
$$
g(x)=
\begin{cases}
1,\quad &x=\gamma_j(t)\\
0,\quad &x\neq \gamma_j(t)
\end{cases}
$$
in \eqref{AveragingProp} and sending $s\rightarrow t^-$ gives
\be\label{SimpleAveragingProp}
\left(\sum_{\gamma_i(t)=\gamma_j(t)}m_i\right)\dot\gamma_j(t+)=\sum_{\gamma_i(t)=\gamma_j(t)}m_i\dot\gamma_i(t-)\ee
for $j=1,\dots, N$. Each summation above is taken over $i\in \{1,\dots, N\}$ such that $\gamma_i(t)=\gamma_j(t)$.
\end{rem}

\subsection{Energy estimates}
We also can prove that the total energy of finite particle systems is non-increasing in time.  In particular, the total energy will only be constant for systems where 
particles do not collide. 
\begin{prop}
For each $0\le s< t$
\begin{align}\label{ContNonincreaseEnergy}
&\frac{1}{2}\sum^N_{i=1}m_i\dot\gamma_i(t+)^2+
\frac{1}{2}\sum^N_{i,j=1}m_im_jW(\gamma_i(t)-\gamma_j(t))\\
&\quad \le \frac{1}{2}\sum^N_{i=1}m_i\dot\gamma_i(s+)^2+
\frac{1}{2}\sum^N_{i,j=1}m_im_jW(\gamma_i(s)-\gamma_j(s)).
\end{align}
\end{prop}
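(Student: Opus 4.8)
The plan is to track the total energy
$$
\mathcal{E}(t):=\frac{1}{2}\sum^N_{i=1}m_i\dot\gamma_i(t+)^2+\sum^N_{i=1}m_iV(\gamma_i(t))+\frac{1}{2}\sum^N_{i,j=1}m_im_jW(\gamma_i(t)-\gamma_j(t))
$$
across the finitely many first intersection times $0=t_0<t_1<\dots<t_\ell$, showing that $\mathcal{E}$ is constant on each interval $[t_k,t_{k+1})$ (and on $[t_\ell,\infty)$) and that it drops weakly at each $t_{k+1}$. Since $0\le s<t$ are arbitrary, this yields $\mathcal{E}(t)\le\mathcal{E}(s)$.

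First I would show $\mathcal{E}$ is conserved between collisions. On each open interval $(t_k,t_{k+1})$ all paths are $C^2$, satisfy \eqref{gammaODEt}, and $\dot\gamma_i(\tau+)=\dot\gamma_i(\tau)$ there, so $\gamma_1,\dots,\gamma_N$ restricted to that interval solve \eqref{NewtonSystem2}; hence the conservation-of-energy identity \eqref{ConsEnergy} (which is proved by differentiation and applies on any interval) shows $\mathcal{E}$ is constant on $(t_k,t_{k+1})$. Equivalently, differentiating $\mathcal{E}$ directly and using that $\frac12\sum_{i,j}m_im_jW'(\gamma_i-\gamma_j)(\dot\gamma_i-\dot\gamma_j)=\sum_{i,j}m_im_jW'(\gamma_i-\gamma_j)\dot\gamma_i$ (swap $i\leftrightarrow j$ and use that $W'$ is odd), one gets $\frac{d}{dt}\mathcal{E}(\tau)=\sum_i m_i\dot\gamma_i(\tau)\big(\ddot\gamma_i(\tau)+V'(\gamma_i(\tau))+\sum_j m_jW'(\gamma_i(\tau)-\gamma_j(\tau))\big)=0$. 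Because $\mathcal{E}$ is also right-continuous at $t_k$ (by Remark \ref{InterchangeDerandLimit}, $\dot\gamma_i(\tau)\to\dot\gamma_i(t_k+)$ as $\tau\to t_k+$, while the position terms are continuous), $\mathcal{E}$ is constant on the whole interval $[t_k,t_{k+1})$ and $\lim_{\tau\to t_{k+1}-}\mathcal{E}(\tau)=\mathcal{E}(t_k)$.

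Next I would show the energy can only drop at a collision. At $t_{k+1}$ the positions $\gamma_i(t_{k+1})$ are continuous, so the $V$- and $W$-terms of $\mathcal{E}$ are unchanged in the limit, and it remains to compare $\frac12\sum_i m_i\dot\gamma_i(t_{k+1}+)^2$ with $\frac12\sum_i m_i\dot\gamma_i(t_{k+1}-)^2$. For each cluster $C$ of indices sharing a common position at $t_{k+1}$, the post-collision velocity is the mass-weighted average of the pre-collision velocities by \eqref{SimpleAveragingProp} (equivalently Proposition \ref{StickyParticlesExist}$(iv)$), and Jensen's inequality for $x\mapsto x^2$ (or Cauchy--Schwarz) gives $\big(\sum_{i\in C}m_i\big)\big(\sum_{i\in C}m_i\dot\gamma_i(t_{k+1}-)/\sum_{i\in C}m_i\big)^2\le\sum_{i\in C}m_i\dot\gamma_i(t_{k+1}-)^2$; singleton clusters give equality. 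Summing over clusters yields $\sum_i m_i\dot\gamma_i(t_{k+1}+)^2\le\sum_i m_i\dot\gamma_i(t_{k+1}-)^2$, hence $\mathcal{E}(t_{k+1})\le\lim_{\tau\to t_{k+1}-}\mathcal{E}(\tau)=\mathcal{E}(t_k)$.

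Combining the two steps, $\mathcal{E}(t_0)\ge\mathcal{E}(t_1)\ge\dots\ge\mathcal{E}(t_\ell)$ with $\mathcal{E}$ constant on each $[t_k,t_{k+1})$ and on $[t_\ell,\infty)$, so $\mathcal{E}(t)\le\mathcal{E}(s)$ whenever $0\le s<t$, which is the asserted inequality. I do not expect a genuine obstacle here: the analytic content is just the (localized) conservation of energy for the smooth ODE and the elementary convexity inequality for perfectly inelastic collisions; the only point requiring care is the bookkeeping of one-sided limits $\dot\gamma_i(t_k\pm)$ at the intersection times, for which Remark \ref{InterchangeDerandLimit} and the averaging identity \eqref{SimpleAveragingProp} provide exactly what is needed.
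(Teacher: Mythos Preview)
Your proposal is correct and follows essentially the same approach as the paper: conservation of energy on the intervals between first intersection times (via the ODE \eqref{gammaODEt} and \eqref{ConsEnergy}), Jensen's inequality applied cluster-by-cluster at each collision time using property $(iv)$ of Proposition \ref{StickyParticlesExist}, and then chaining these together across $[s,t]$. The paper's proof is organized slightly differently---it first establishes the discrete inequality \eqref{DisNonincreaseEnergy} at the $t_k$'s and then interpolates general $s<t$ via the smallest and largest $t_k\in(s,t)$---but the substance is identical to your argument.
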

\begin{proof}
Let $t_0=0$ and $t_1<\dots<t_\ell$ denote the first intersection times of the paths $\gamma_1,\dots, \gamma_N$. Recall that $\gamma_1,\dots, \gamma_N$ satisfy the ODE \eqref{gammaODEt} on $(t_{k-1},t_k)$ for $k=1,\dots, \ell$
and so the conservation of energy holds on these intervals. And in view of Remark \ref{AveragingRemark}, we can apply Jensen's inequality to derive
$$
\frac{1}{2}\sum^N_{i=1}m_i\dot\gamma_i(t_{k}+)^2\le \frac{1}{2}\sum^N_{i=1}m_i\dot\gamma_i(t_{k}-)^2
$$
for $k=1,\dots, \ell$. Consequently, 
\begin{align}\label{DisNonincreaseEnergy}
&\frac{1}{2}\sum^N_{i=1}m_i\dot\gamma_i(t_{k+1}+)^2+
\frac{1}{2}\sum^N_{i,j=1}m_im_jW(\gamma_i(t_{k+1})-\gamma_j(t_{k+1})) \nonumber \nonumber \\
&\quad\le\frac{1}{2}\sum^N_{i=1}m_i\dot\gamma_i(t_{k+1}-)^2+
\frac{1}{2}\sum^N_{i,j=1}m_im_jW(\gamma_i(t_{k+1})-\gamma_j(t_{k+1}))\nonumber \\
&\quad=\frac{1}{2}\sum^N_{i=1}m_i\dot\gamma_i(t_{k}+)^2+
\frac{1}{2}\sum^N_{i,j=1}m_im_jW(\gamma_i(t_{k})-\gamma_j(t_{k}))
\end{align}
for $k=0,\dots, \ell-1.$

\par Now suppose $0\le s<t$. If no $t_1,\dots,t_\ell$ belong to the interval $(s,t)$, we conclude by the conservation of 
energy. Otherwise, select $t_k$ to be the smallest $\{t_1,\dots, t_\ell\}$ belonging to $(s,t)$ and select
$t_r$ to be the largest $\{t_1,\dots, t_\ell\}$ belonging to $(s,t)$. Then $\gamma_1,\dots, \gamma_N$ satisfies
\eqref{gammaODEt} on $(s,t_k)$ and on $(t_r, t)$ so that the conservation holds on these intervals. Combining with  
\eqref{DisNonincreaseEnergy} then gives
\begin{align}
&\frac{1}{2}\sum^N_{i=1}m_i\dot\gamma_i(t+)^2+
\frac{1}{2}\sum^N_{i,j=1}m_im_jW(\gamma_i(t)-\gamma_j(t)) \nonumber \nonumber \\
&\quad=\frac{1}{2}\sum^N_{i=1}m_i\dot\gamma_i(t_{r}+)^2+
\frac{1}{2}\sum^N_{i,j=1}m_im_jW(\gamma_i(t_{r})-\gamma_j(t_{r}))\nonumber \\
&\quad\le \frac{1}{2}\sum^N_{i=1}m_i\dot\gamma_i(t_{k}+)^2+
\frac{1}{2}\sum^N_{i,j=1}m_im_jW(\gamma_i(t_{k})-\gamma_j(t_{k}))\\
&\quad\le \frac{1}{2}\sum^N_{i=1}m_i\dot\gamma_i(t_{k}-)^2+
\frac{1}{2}\sum^N_{i,j=1}m_im_jW(\gamma_i(t_{k})-\gamma_j(t_{k}))\\
 &\quad=\frac{1}{2}\sum^N_{i=1}m_i\dot\gamma_i(s+)^2+
\frac{1}{2}\sum^N_{i,j=1}m_im_jW(\gamma_i(s)-\gamma_j(s)).
\end{align}
\end{proof}
\begin{cor}\label{discreteEnergyEstimates}  
Define 
\be\label{varphiFun}
\varphi(t):=e^{(c+1)t^2}\int^t_{0}e^{-(c+1)s^2}ds.
\ee
For each $0\le t_1\le t_2$,  
\be\label{DiscreteEnergyEstA}
\int^{t_2}_{t_1}\sum^N_{i=1}m_i\dot\gamma_i(s)^2ds\le (\varphi(t_2)-\varphi(t_1))\;
\left(\sum^N_{i=1}m_iv_0(x_i)^2 +\frac{1}{2}\sum^N_{i,j=1}m_im_jW'(x_i-x_j)^2 \right).
\ee
\end{cor}
\begin{proof}
As $x\mapsto W(x)+(c/2)x^2$ is convex,
\begin{align*}
W(\gamma_i(t)-\gamma_j(t))&\ge W(x_i-x_j)+ W'(x_i-x_j)(\gamma_i(t)-\gamma_j(t)-(x_i-x_j)) \\
&\quad\quad\quad-\frac{c}{2}(\gamma_i(t)-x_i-(\gamma_j(t)-x_j))^2 \\ 
&\ge W(x_i-x_j)- \frac{1}{2}W'(x_i-x_j)^2  -\frac{c+1}{2}(\gamma_i(t)-x_i-(\gamma_j(t)-x_j))^2 \\
&\ge W(x_i-x_j)- \frac{1}{2}W'(x_i-x_j)^2  -(c+1)((\gamma_i(t)-x_i)^2+(\gamma_j(t)-x_j)^2) \\
&\ge W(x_i-x_j)- \frac{1}{2}W'(x_i-x_j)^2 -(c+1)t\left( \int^t_{0}\dot\gamma_i(s)^2ds+\int^t_{0}\dot\gamma_j(s)^2ds \right).
\end{align*}
Combining this lower bounds with \eqref{ContNonincreaseEnergy} gives
\begin{align}\label{SteptoSecondGronwall}
\sum^N_{i=1}m_i\dot\gamma_i(t)^2&\le \sum^N_{i=1}m_iv_0(x_i)^2 +\frac{1}{2}\sum^N_{i,j=1}m_im_jW'(x_i-x_j)^2 \nonumber\\
&\hspace{1in}+ 2(c+1)t\int^t_{0}\sum^N_{i=1}m_i\dot\gamma_i(s)^2ds.
\end{align}
\par As a result,
\begin{align*}
&\frac{d}{dt}e^{-(c+1)t^2}\int^t_{0}\sum^N_{i=1}m_i\dot\gamma_i(s)^2ds\\
&\quad =e^{-(c+1)t^2}\left(\sum^N_{i=1}m_i\dot\gamma_i(t)^2-2(c+1) t\int^t_{0}\sum^N_{i=1}m_i\dot\gamma_i(s)^2ds\right)\\
&\quad \le e^{-(c+1)t^2}\left(\sum^N_{i=1}m_iv_0(x_i)^2 +\frac{1}{2}\sum^N_{i,j=1}m_im_jW'(x_i-x_j)^2\right).
\end{align*}
Integrating from $0$ to $t$ gives,
$$
\int^t_{0}\sum^N_{i=1}m_i\dot\gamma_i(s)^2ds\le \varphi(t)\left(\sum^N_{i=1}m_iv_0(x_i)^2 +\frac{1}{2}\sum^N_{i,j=1}m_im_jW'(x_i-x_j)^2 \right).
$$
Upon substituting this inequality in \eqref{SteptoSecondGronwall}, we find 
\begin{align*}
\sum^N_{i=1}m_i\dot\gamma_i(t)^2&\le (1+\kappa t \varphi(t))\left(\sum^N_{i=1}m_iv_0(x_i)^2 +\frac{1}{2}\sum^N_{i,j=1}m_im_jW'(x_i-x_j)^2 \right)\\
&= \varphi'(t) \left(\sum^N_{i=1}m_iv_0(x_i)^2+\frac{1}{2}\sum^N_{i,j=1}m_im_jW'(x_i-x_j)^2 \right).
\end{align*}
Inequality \eqref{DiscreteEnergyEstA} now follows from integrating from $t_1$ to $t_2$. 
\end{proof}

\section{Probability measures on the path space}\label{PathSpaceSect}
We now consider $\Gamma:=C([0,\infty))$, the space of continuous paths from $[0,\infty)$ into $\R$, equipped with the following distance 
$$
d(\gamma,\xi):=\sum_{n\in \N}\frac{1}{2^n}\left(\frac{\displaystyle\max_{0\le t\le n}|\gamma(t)-\xi(t)|}{1+\displaystyle\max_{0\le t\le n}|\gamma(t)-\xi(t)|}\right)
\quad\quad (\gamma,\xi\in \Gamma).
$$
It is routine to check that $\lim_{k\rightarrow\infty}d(\xi_k,\xi)= 0$ if and only if $\xi_k\rightarrow \xi$ locally uniformly on $[0,\infty)$. It is also not difficult 
to verify that $\Gamma$ is a complete and separable metric space (see for instance the appendix of \cite{MR3302526}).

\par In this section, we will associate a Borel probability measure on $\Gamma$, which we will write as $\eta\in {\cal P}(\Gamma)$, to a given $\rho_0\in {\cal P}(\R)$ and absolutely continuous $v_0: \R\rightarrow \R$.  In particular, we will interpret 
the support of $\eta$ as the set of trajectories of a collection of evolving point masses which which interact pairwise with potential $W$ and via perfectly inelastic collisions when collisions occur.   To this end, we will employ the evaluation map 
$$
e_t: \Gamma\rightarrow \R; \gamma\mapsto \gamma(t)
$$ 
and the push forward measure $e_t{_\#}\eta\in {\cal P}(\R)$
\be\label{PushForward}
\int_{\R}gd(e_t{_\#}\eta):=\int_{\Gamma}g(\gamma(t))d\eta(\gamma),
\ee
for each $t\ge 0$.  

\par We will also make use of the space
$$
X:=\left\{\gamma: [0,\infty)\rightarrow \R: \text{$\gamma$ absolutely continuous,}\; \int^n_0\dot\gamma(t)^2dt<\infty \;\text{for all $n\in \N$}\right\}
$$
and the function
\be\label{Gfunction}
\Psi(\gamma)
:=
\begin{cases}
\displaystyle\sum^\infty_{n=1}\frac{1}{2^n\varphi(n)}\left(\int^n_0\dot\gamma(t)^2dt+\gamma(0)^2\right), \quad & \gamma\in X\\
+\infty, \quad & \gamma\not\in X
\end{cases}
\ee
($\gamma\in \Gamma$).  Recall that $\varphi$ was defined in \eqref{varphiFun}. It is a straightforward exercise to employ the Arzel\`a-Ascoli theorem and check that $\Psi$ has compact sublevel sets within $\Gamma$.  Our central existence assertion is as follows.

\begin{thm}\label{EtaThm}
Assume $\rho_0\in {\cal P}(\R)$ satisfies \eqref{secondMoment} and $v_0:\R\rightarrow\R$ is absolutely continuous.  Further suppose $W:\R\rightarrow\R$ satisfies \eqref{Wsemiconvex} for some $c>0$ and \eqref{Wprime}.
There is $\eta\in {\cal P}(\Gamma)$ which has the following properties. 
\begin{enumerate}[(i)]

\item  For $\eta$ almost every $\gamma\in \Gamma$, $\Psi(\gamma)<\infty$. 

\item $\rho_0=e_0{_\#}\eta$.

\item For each $0<s\le t$ and $\gamma,\xi\in \textup{supp}(\eta)$,  
\be
\frac{|\gamma(t)-\xi(t)|}{\sinh(\sqrt{c}t)}\le \frac{|\gamma(s)-\xi(s)|}{\sinh(\sqrt{c}s)}.
\ee

\item For each $t \ge 0$ and $\gamma,\xi\in \textup{supp}(\eta)$ with $\gamma(0)\ge \xi(0)$,  
\be
0\le \gamma(t)-\xi(t)\le \cosh(\sqrt{c}t)(\gamma(0)-\xi(0))+\frac{1}{\sqrt{c}}\sinh(\sqrt{c}t)\int^{\gamma(0)}_{\xi(0)}|v_0'(x)|dx
\ee

\item There is a Borel $v:\R\times(0,\infty)\rightarrow \R$ such that
$$
\dot\gamma(t)=v(\gamma(t),t)\;\; \text{a.e.}\; t>0
$$
for $\eta$ almost every $\gamma\in \Gamma$. 

\item For  almost every $t\ge 0$ and each Borel $h:\R\rightarrow \R$ with $\int_{\Gamma}h(\gamma(t))^2d\eta(\gamma)<\infty$,
\be\label{EtaConsMom}
\int_{\Gamma}\dot\gamma(t)h(\gamma(t))d\eta(\gamma)=\int_{\Gamma}\left(v_0(\gamma(0)) -\int^t_0W'*(e_s{_\#}\eta)(\gamma(s))ds\right)h(\gamma(t))d\eta(\gamma).
\ee
 
\item For  almost every $0\le s\le t$,
\begin{align*}
&\int_{\Gamma}\frac{1}{2}\dot\gamma(t)^2d\eta(\gamma)+\frac{1}{2}\int_{\Gamma}\int_{\Gamma}W(\gamma(t)-\xi(t))d\eta(\gamma)d\eta(\xi)\\
&\hspace{1in} \le \int_{\Gamma}\frac{1}{2}\dot\gamma(s)^2d\eta(\gamma)+\frac{1}{2}\int_{\Gamma}\int_{\Gamma}W(\gamma(s)-\xi(s))d\eta(\gamma)d\eta(\xi).
\end{align*}

\end{enumerate}

\end{thm}
Our first step in proving this theorem is showing that it holds when $\rho_0$ is a convex combination of Dirac measures. In this case, we also establish two key estimates.  In proving estimate \eqref{PointwiseBound} below, we will recall that since $v_0$ is absolutely continuous,   
$$
\omega(r):=\sup\left\{\int^b_a|v_0'(x)|dx: 0\le b-a\le r\right\},\quad r\ge 0
$$ 
tends to $0$ as $r\rightarrow 0^+$. In particular, $\omega$ is uniformly continuous and grows at most linearly; so may choose $\alpha>0$ for which 
\be\label{OmegaAlpha}
\omega(r)\le \alpha(r+1),\quad r\ge 0. 
\ee
\begin{lem}\label{EtaLem}
Suppose 
$$
\rho_0=\sum^N_{i=1}m_i\delta_{x_i}\in {\cal P}(\R)
$$
and let $\gamma_1,\dots, \gamma_N$ be a collection of sticky particle trajectories with masses $m_1,\dots, m_N$, initial positions $x_1,\dots, x_N$, and initial velocities $v_0(x_1),\dots, v_0(x_N)$. Then 
\be
\eta=\sum^N_{i=1}m_i\delta_{\gamma_i}\in {\cal P}(\Gamma)
\ee
satisfies properties $(i)-(vii)$ of Theorem \eqref{EtaThm}. Moreover, 
\be\label{PsiEstimate}
\int_{\Gamma}\Psi(\gamma)d\eta(\gamma)\le 
\int_\R\left(x^2+v_0(x)^2\right)d\rho_0(x)+\frac{1}{2}\int_\R\int_\R W'(x-y)^2d\rho_0(x)d\rho_0(y),
\ee
and
\begin{align}\label{PointwiseBound}
|\gamma(t)|&\le \left(\cosh(\sqrt{c}t)+\frac{\alpha}{\sqrt{c}}\sinh(\sqrt{c}t)\right)\left(|\gamma(0)|+\int_{\R}|x|d\rho_0(x)+1\right) \\
&\quad+\sqrt{\varphi(t)}\left( \int_\R\left(x^2+v_0(x)^2\right)d\rho_0(x)+\frac{1}{2}\int_\R\int_\R W'(x-y)^2d\rho_0(x)d\rho_0(y)\right)^{1/2}
\end{align}
for each $t\ge 0$ and $\gamma\in\textup{supp}(\eta)$. 
\end{lem}
\begin{proof}
\par 1. By \eqref{DiscreteEnergyEstA},  
\be
\int_\Gamma\left(\int^n_0\dot\gamma(t)^2dt\right)d\eta(\gamma)\le \varphi(n)\left(\int_\R v_0(x)^2d\rho_0(x)+\frac{1}{2}\int_\R\int_\R W'(x-y)^2d\rho_0(x)d\rho_0(y)\right)
\ee
for each $n\in \N$. As $\varphi(n)\ge 1$ for all $n\in \N$ and $\sum_{n\in\N}1/2^{n}=1$, it follows that 
\begin{align*}
\int_{\Gamma}\Psi(\gamma)d\eta(\gamma)
&=\int_{\Gamma}\left[\sum^\infty_{n=1}\frac{1}{2^n\varphi(n)}\left(\int^n_0\dot\gamma(t)^2dt+\gamma(0)^2\right)\right]d\eta(\gamma)\nonumber\\
&=\sum^\infty_{n=1}\frac{1}{2^n\varphi(n)}\int_{\Gamma}\left(\int^n_0\dot\gamma(t)^2dt\right)d\eta(\gamma)+
\sum^\infty_{n=1}\frac{1}{2^n\varphi(n)}\left(\int_{\Gamma}\gamma(0)^2d\eta(\gamma)\right)\nonumber\\
&\le \sum^\infty_{n=1}\frac{1}{2^n}\left(\int_\R\left(x^2+v_0(x)^2\right)d\rho_0(x)+\frac{1}{2}\int_\R\int_\R W'(x-y)^2d\rho_0(x)d\rho_0(y)\right)\\&=\int_\R\left(x^2+v_0(x)^2\right)d\rho_0(x)+\frac{1}{2}\int_\R\int_\R W'(x-y)^2d\rho_0(x)d\rho_0(y).
\end{align*}
This proves \eqref{PsiEstimate} and that $\eta$ satisfies property $(i)$ of Theorem \eqref{EtaThm}.

\par 2.  That $\eta$ satisfies property $(ii)$ follows from 
$$
\int_{\R}gd\rho_0=\sum^N_{i=1}m_ig(x_i)=\sum^N_{i=1}m_ig(\gamma_i(0))=\int_{\Gamma}g(\gamma(0))d\eta(\gamma).
$$
Properties $(iii)$ and $(iv)$ are a consequence of Proposition \ref{PropQSPP}. As for property $(v)$, set
$$
v(x,t)=
\begin{cases}
\dot\gamma_i(t+), \quad &x=\gamma_i(t)\\
0, \quad &\text{otherwise},
\end{cases}
$$
and note that part $(iii)$ of Proposition \ref{StickyParticlesExist} implies that $v$ is well defined. Property $(vi)$ is a corollary of \eqref{AveragingProp}, 
and property $(vii)$ follows from \eqref{ContNonincreaseEnergy}.

\par 3. As $\eta$ satisfies property $(vi)$, 
$$
|\gamma(t)-\xi(t)|\le\cosh(\sqrt{c}t)|\gamma(0)-\xi(0)| +\frac{1}{\sqrt{c}}\sinh(\sqrt{c}t)\omega\left(|\gamma(0)-\xi(0)|\right)
$$
for $\gamma,\xi\in\text{supp}(\eta)$ and $t\ge 0$. It follows that 
\begin{align}\label{TowardsPointwiseBound}
|\gamma(t)|&\le \int_{\Gamma}|\gamma(t)-\xi(t)|d\eta(\xi)+ \int_{\Gamma}|\xi(t)|d\eta(\xi)\nonumber\\
&\le \cosh(\sqrt{c}t) \int_{\R}|\gamma(0)-x|d\rho(x)+\frac{1}{\sqrt{c}}\sinh(\sqrt{c}t) \int_{\R}\omega(|\gamma(0)-x|)d\rho(x)\nonumber\\
&\quad+ \left(\int_{\Gamma}\xi(t)^2d\eta(\xi)\right)^{1/2}\nonumber\\
&\le \cosh(\sqrt{c}t) \left(|\gamma(0)|+\int_{\R}|x|d\rho(x)\right)+\frac{\alpha}{\sqrt{c}}\sinh(\sqrt{c}t)\left(1+|\gamma(0)|+\int_{\R}|x|d\rho(x)\right)\nonumber\\
&\quad+\sqrt{2}\left(\int_{\Gamma}\left(\xi(0)^2+\int^t_0\dot\xi(s)^2ds \right)d\eta(\xi)\right)^{1/2}\nonumber\\
&\le \left(\cosh(\sqrt{c}t)+\frac{\alpha}{\sqrt{c}}\sinh(\sqrt{c}t)\right)\left(1+|\gamma(0)|+\int_{\R}|x|d\rho(x)\right)\nonumber\\
&\quad+\sqrt{2\varphi(t)}\left( \int_\R\left(x^2+v_0(x)^2\right)d\rho_0(x)+\frac{1}{2}\int_\R\int_\R W'(x-y)^2d\rho_0(x)d\rho_0(y)\right)^{1/2}
\end{align}
which is \eqref{PointwiseBound}. 
\end{proof}
In the following proof of Theorem \ref{EtaThm}, we will say that $(\eta^k)_{k\in \N}\subset {\cal P}(\Gamma)$ {\it converges narrowly} to $\eta \in {\cal P}(\Gamma)$ provided 
\be\label{NarrowConvCondEta}
\lim_{k\rightarrow\infty}\int_{\Gamma}F(\gamma)d\eta^{k}(\gamma)=\int_{\Gamma}F(\gamma)d\eta(\gamma)
\ee
for each bounded, continuous $F: \Gamma\rightarrow\R.$   If $(\eta^k)_{k\in \N}\subset {\cal P}(\Gamma)$ converges narrowly to $\eta$ and
the limit \eqref{NarrowConvCondEta} exists and is finite for a continuous $F: \Gamma\rightarrow[0,\infty)$, we will say that $F$ is {\it uniformly integrable} (with respect to $(\eta^k) _{k\in \N}$). In particular, we will make use of the fact that 
if $F$ is uniformly integrable and $G: \Gamma\rightarrow\R$ is continuous with $|G|\le F$ on $\text{supp}(\eta^k)$ for each $k\in \N$, then $G$ is uniformly integrable, as well (this follows from Lemma 5.1.7 and the more general definition of uniformly integrability given in section 5.1.1 of \cite{AGS}). 

\par We also note that if $(\eta^k)_{k\in \N}\subset {\cal P}(\Gamma)$ converges narrowly to $\eta$, then 
\be\label{NarrowConvCondEtaSq}
\lim_{k\rightarrow\infty}\int_{\Gamma}H(\gamma,\xi)d\eta^{k}(\gamma)d\eta^{k}(\xi)=\int_{\Gamma}H(\gamma,\xi)d\eta(\gamma)d\eta(\xi)
\ee
bounded, continuous $H: \Gamma\times \Gamma\rightarrow\R$ (Theorem 2.8 \cite{Billingsley}).  In this case, we'll say $(\eta^k\times \eta^k)_{k\in \N} \subset {\cal P}(\Gamma\times \Gamma)$ converges narrowly to $\eta\times \eta$.  The notion of uniform integrability analogously extends to narrow convergence on ${\cal P}(\Gamma\times \Gamma)$.  

\begin{proof}[Proof of Theorem \ref{EtaThm}] Suppose $\rho_0\in {\cal P}(\R)$ satisfies \eqref{secondMoment}. We may select a sequence $(\rho^k_0)_{k\in \N}\subset {\cal P}(\R)$ in which each $\rho_0^k$ is a convex combination of Dirac measures, $\rho^k_0\rightarrow \rho_0$ narrowly, and 
\be\label{SecondMomentLimit}
\lim_{k\rightarrow\infty}\int_{\R}x^2d\rho^k_0(x)=\int_{\R}x^2d\rho_0(x).
\ee
The existence of such an approximating sequence is well known and can be verified as in Appendix A of \cite{Hynd}. Note that \eqref{SecondMomentLimit} and  assumption \eqref{Wprime} allow us to choose $B$ such that 
$$
 \int_\R\left(x^2+v_0(x)^2\right)d\rho^k_0(x)+\frac{1}{2}\int_\R\int_\R W'(x-y)^2d\rho^k_0(x)d\rho^k_0(y)\le B
$$
for $k\in \N$. 

\par By Lemma \ref{EtaLem}, there is an $\eta^k\in {\cal P}(\Gamma)$ satisfying conditions $(i)-(vii)$ with $\rho^k_0$ instead of $\rho_0$ for each $k\in \N$.  In view of \eqref{PsiEstimate} and our selection of $B$, 
$$
\sup_{k\in \N}\int_{\Gamma}\Psi(\gamma)d\eta^k(\gamma)\le B.
$$
As the sublevel sets of $\Psi$ are compact, Prokhorov's theorem (Theorem 5.1.3 of \cite{AGS}) implies there is a subsequence $(\eta^{k_j})_{j\in\N}$ which converges narrowly to some $\eta^\infty\in {\cal P}(\Gamma)$.
\par In view of \eqref{SecondMomentLimit}, $\gamma\mapsto\gamma(0)^2$ is uniformly integrable with respect to $(\eta^{k_j})_{j\in\N}$. By \eqref{PointwiseBound}, 
\be\label{PointwiseBoundTrue} 
\gamma(t)^2\le 4\left(\cosh(\sqrt{c}t)+\frac{\alpha}{\sqrt{c}}\sinh(\sqrt{c}t)\right)^2\left(\gamma(0)^2+B+1\right)+2\varphi(t)B
\ee
for $t\ge 0$ and each $\gamma\in \text{supp}(\eta^k)$.  It follows that $\gamma\mapsto\gamma(t)^2$ is also uniformly integrable with respect to $(\eta^{k_j})_{j\in\N}$ for each $t\ge 0$.  

\par We now proceed to show that $\eta^\infty$ has the claimed properties $(i)-(vii)$.  Whenever necessary, we will use that $\eta^{k_j}$ fulfills these conditions with $\rho^{k_j}_0$ instead of $\rho_0$ for each $j\in \N$. 
\\\\
\underline{Proof of $(i)$}.  Recall that $\Psi$ is lower semicontinuous and nonnegative. By the narrow convergence of $\eta^{k_j}\rightarrow \eta^\infty$, 
\be
\int_{\Gamma}\Psi(\gamma)d\eta^\infty(\gamma)\le \liminf_{j\rightarrow\infty}\int_{\Gamma}\Psi(\gamma)d\eta^{k_j}(\gamma)\le B
\ee
(Lemma 5.1.7 \cite{AGS}). Therefore, $\Psi(\gamma)<\infty$ for $\eta$ almost every $\gamma$. 
\\\\
\underline{Proof of $(ii)$}. 
Since $e_0:\Gamma\rightarrow\R;\gamma\mapsto \gamma(0)$ is continuous, 
$$
\rho_0=\lim_{j\rightarrow\infty}\rho^{k_j}_0=\lim_{j\rightarrow\infty}e_0{_\#}\eta^{k_j}=e_0{_\#}\eta^{\infty}.
$$
\\
\underline{Proof of $(iii)$ and $(iv)$}. For each $\gamma,\xi\in\text{supp}(\eta^\infty)$, there are $\gamma^j,\xi^j\in\text{supp}(\eta^{k_j})$
such that $\gamma^j\rightarrow \gamma$ and $\xi^j\rightarrow \xi$ as $j\rightarrow \infty$. For $0<s\le t$,
\begin{align*}
\frac{|\gamma(t)-\xi(t)|}{\sinh(\sqrt{c}t)}&=\lim_{j\rightarrow\infty} \frac{|\gamma^j(t)-\xi^j(t)|}{\sinh(\sqrt{c}t)}\\
&\le \lim_{j\rightarrow\infty} \frac{|\gamma^j(s)-\xi^j(s)|}{\sinh(\sqrt{c}s)}\\
&=\frac{|\gamma(s)-\xi(s)|}{\sinh(\sqrt{c}s)}.
\end{align*}
We can argue in the same way to establish $(iv)$. 
\\\\
\underline{Proof of $(v)$}. For $x\in \R$ and $0<s\le t$, set 
$$
f(x,t,s):=\inf\left\{\xi(t)+\frac{\sinh(\sqrt{c}t)}{\sinh(\sqrt{c}s)}|x-\xi(s)|: \xi\in\text{supp}(\eta^\infty)\right\}.
$$
Note that if $x=\gamma(s)$ for some $\gamma\in\text{supp}(\eta^\infty)$, we can choose $\xi=\gamma$ in the above 
infimum to get $f(\gamma(s),t,s)\le \gamma(t)$. By part $(iii)$, we also have that $\xi(t)+\frac{\sinh(\sqrt{c}t)}{\sinh(\sqrt{c}s)}|\gamma(s)-\xi(s)|
\ge \gamma(t)$ for all $\xi\in\text{supp}(\eta^\infty)$. As a result, 
$$
f(\gamma(s),t,s)= \gamma(t)
$$
for each $\gamma\in\text{supp}(\eta^\infty)$ and $s\le t$.  

\par Next we set
$$
v_n(x,t)=n(f(x,t+1/n,t)-x)
$$
for $(x,t)\in \R\times (0,\infty)$ and $n\in \N$.  Notice that $v_n$ is Borel measurable since $f$ is upper semicontinuous. Furthermore, 
\be\label{vndiffquotient}
v_n(\gamma(t),t)=n(\gamma(t+1/n)-\gamma(t))
\ee
for each $\gamma\in\text{supp}(\eta^\infty)$. 

\par In order to study the limit of $v_n$ as $n\rightarrow\infty$, we define
\begin{align*}
F&=\{(\gamma(t),t)\in\R\times(0,\infty): \gamma\in\text{supp}(\eta^\infty),\;\Psi(\gamma)<\infty,\dot\gamma(t)\;\text{exists} \}.
\end{align*}
Observe that 
$$
F=\bigcup_{m\in \N}\bigcap_{k\in \N}\bigcup_{N\in \N}F_{m,k,N},
$$
where 
\begin{align}
&F_{m,k,N}:=\displaystyle\bigcap_{\epsilon,\delta\in[-1/N,1/N]}\bigg\{(\gamma(t),t)\in\R\times[1/N,\infty): \gamma\in\text{supp}(\eta^\infty),\;\Psi(\gamma)\le m,\\
&\hspace{2in}\left. \left|\frac{\gamma(t+\epsilon)-\gamma(t)}{\epsilon}-\frac{\gamma(t+\delta)-\gamma(t)}{\delta}\right|\le \frac{1}{k}  \right\}.
\end{align}
As $\Psi$ has compact sublevel sets and $\text{supp}(\eta^\infty)$ is closed,
 it is straightforward to verify that each $F_{m,k,N}\subset \R\times (0,\infty)$ is the intersection of closed sets and is thus closed. Consequently, $F$
is a Borel subset of $\R\times (0,\infty)$.

\par In view of \eqref{vndiffquotient}, the limit
\be\label{derivativeLimitDub}
\dot\gamma(t)=\lim_{n\rightarrow \infty}v_n(\gamma(t),t)=:v(\gamma(t),t)
\ee
exists for each $(\gamma(t),t)\in F$.  Observe that this limit does not depend on $\gamma$. Indeed, if 
$(\xi(t),t)=(\gamma(t),t)\in F$, then $\xi(t+1/n)=\gamma(t+1/n)$ for $n\in \N$ by part $(iii)$ as $\xi,\gamma\in \text{supp}(\eta^\infty)$. Consequently
$$
v(\gamma(t),t)=\lim_{n\rightarrow \infty}n(\gamma(t+1/n)-\gamma(t))=\lim_{n\rightarrow \infty}n(\xi(t+1/n)-\xi(t))=v(\xi(t),t),
$$
so $v: F\rightarrow \R$ is well defined. 

\par  We emphasize that 
$$
v(x,t)=\lim_{n\rightarrow \infty}v_n(x,t)
$$
for $(x,t)\in F$. In particular, $v$ is Borel measurable as it is the pointwise limit of Borel functions. 
We may also extend $v$ to $\R\times (0,\infty)$ by setting it equal to zero on the complement of $F$. Once we identify $v$ with this extension, we obtain a Borel $v: \R\times(0,\infty)$ such that 
$$
\dot\gamma(t)=v(\gamma(t),t),\; a.e.\; t>0
$$
for $\gamma\in \text{supp}(\eta^\infty)$ with $\Psi(\gamma)<\infty$.
\\\\
\underline{Proof of $(vi)$}.  Suppose $h: \R\rightarrow \R$ is continuous with 
\be\label{hLinearGrowth}
|h(x)|\le C_0(1+|x|), \quad x\in \R.
\ee
We note that if $\eta^\infty$ satisfies property $(vi)$ for all $h$ satisfying \eqref{hLinearGrowth}, then $\eta^\infty$ satisfies property $(vi)$ for all Borel $h: \R\rightarrow \R$ with $\int_{\Gamma}h(\gamma(t))^2d\eta^\infty(\gamma)<\infty$ (Proposition 7.9 of \cite{Folland}). Consequently, it suffices to send $j\rightarrow\infty$ in
\begin{align}\label{EtaConsMomjay}
&\int^{t_2}_{t_1}\left(\int_{\Gamma}\dot\gamma(t)h(\gamma(t))d\eta^{k_j}(\gamma)\right)dt=  \\
&\hspace{1in}\int^{t_2}_{t_1}\left(\int_{\Gamma}\left(v_0(\gamma(0)) -\int^t_0W'*(e_s{_\#}\eta^{k_j})(\gamma(s))ds\right)h(\gamma(t))d\eta^{k_j}(\gamma)\right)dt
\end{align}
for $h$ which satisfies \eqref{hLinearGrowth} and each $0\le t_1\le t_2$.  

\par To this end, we first set $g(x):=\int^x_0h(y)dy$ and note 
$$
\int^{t_2}_{t_1}\left(\int_{\Gamma}\dot\gamma(t)h(\gamma(t))d\eta^{k_j}(\gamma)\right)dt=\int_{\Gamma}(g(\gamma(t_2))-g(\gamma(t_1)))d\eta^{k_j}(\gamma).
$$
In view of \eqref{hLinearGrowth}, $|g(\gamma(t))|$ grows at most quadratically in $|\gamma(t)|$. As $\gamma\mapsto \gamma(t)^2$ is uniformly integrable, $g(\gamma(t))$ is also uniformly integrable and 
\begin{align}\label{part6no1}
\lim_{j\rightarrow\infty}\int^{t_2}_{t_1}\left(\int_{\Gamma}\dot\gamma(t)h(\gamma(t))d\eta^{k_j}(\gamma)\right)dt&=\lim_{j\rightarrow\infty}\int_{\Gamma}(g(\gamma(t_2))-g(\gamma(t_1)))d\eta^{k_j}(\gamma)\nonumber\\
&=\int_{\Gamma}(g(\gamma(t_2))-g(\gamma(t_1)))d\eta^{\infty}(\gamma)\nonumber\\
&=\int^{t_2}_{t_1}\left(\int_{\Gamma}\dot\gamma(t)h(\gamma(t))d\eta^{\infty}(\gamma)\right)dt.
\end{align}
\par Next we note that since $v_0$ is absolutely continuous, we can choose a constant $C_1$ such that 
\be
|v_0(x)|\le C_1 (1+|x|)
\ee
Thus
\begin{align}\label{vzeroUpper}
|v_0(\gamma(0))h(\gamma(t))|&\le C_0(1+|\gamma(t)|)C_1(1+|\gamma(0)|)\nonumber\\
&\le \frac{1}{2}C_0^2(1+|\gamma(t)|)^2+\frac{1}{2}C_1(1+|\gamma(0)|)^2\nonumber\\
&\le C_0^2(1+\gamma(t)^2)+C_1^2(1+\gamma(0)^2).
\end{align}
Consequently, $v_0(\gamma(0))h(\gamma(t))$ uniformly integrable. Therefore,
\be\label{part7no1}
\lim_{j\rightarrow\infty}\int_{\Gamma} v_0(\gamma(0))h(\gamma(t))d\eta^{k_j}(\gamma)=\int_{\Gamma} v_0(\gamma(0))h(\gamma(t))d\eta^{\infty}(\gamma).
\ee
for all $t\ge 0.$ In view of \eqref{PointwiseBoundTrue} and \eqref{vzeroUpper}, $\int_{\Gamma} v_0(\gamma(0))h(\gamma(t))d\eta^{k_j}(\gamma)$ is uniformly bounded for $t\in [t_1,t_2]$ and $j\in \N$; so we can apply dominated convergence to find
\be\label{part6no2}
\lim_{j\rightarrow\infty}\int^{t_2}_{t_1}\left(\int_{\Gamma} v_0(\gamma(0))h(\gamma(t))d\eta^{k_j}(\gamma)\right)dt=\int^{t_2}_{t_1}\left(\int_{\Gamma} v_0(\gamma(0))h(\gamma(t))d\eta^{\infty}(\gamma)\right)dt.
\ee 

\par By \eqref{hLinearGrowth} and the at most linear growth of $W'$, there is a constant $C_2$ such that
\begin{align}\label{hWprimeUpper}
|h(\gamma(t))W'(\gamma(s)-\xi(s))|&\le C_0(1+|\gamma(t)|)|C_2(1+|\gamma(s)-\xi(s)|)\nonumber\\
& \le C_0^2(1+\gamma(t)^2)+C_2^2(1+(\gamma(s)-\xi(s))^2)\nonumber \\
& \le C_0^2(1+\gamma(t)^2)+2C_2^2(1+\gamma(s)^2+(\xi(s))^2).
\end{align}
As a result $(\gamma,\xi)\mapsto h(\gamma(t))W'(\gamma(s)-\xi(s))$
is uniformly integrable with respect to $(\eta^{k_j}\times\eta^{k_j})_{j\in \N}$.  It follows that 
\begin{align}
\lim_{j\rightarrow\infty}\int_{\Gamma}\int_{\Gamma}h(\gamma(t))W'(\gamma(s)-\xi(s))d\eta^{k_j}(\gamma)d\eta^{k_j}(\xi)=\int_{\Gamma}\int_{\Gamma}h(\gamma(t))W'(\gamma(s)-\xi(s))d\eta^{\infty}(\gamma)d\eta^{\infty}(\xi).
\end{align}
\par Combining \eqref{PointwiseBoundTrue} with \eqref{hWprimeUpper}, we see $\int_{\Gamma}\int_{\Gamma}h(\gamma(t))W'(\gamma(s)-\xi(s))d\eta^{k_j}(\gamma)d\eta^{k_j}(\xi)$ is uniformly bounded for $j\in \N$ and $s\in [0,t]$. By dominated convergence,
\begin{align}\label{part7no2}
&\lim_{j\rightarrow\infty}\int_{\Gamma}\left(\int^t_0W'*(e_s{_\#}\eta^{k_j})(\gamma(s))ds\right)h(\gamma(t))d\eta^{k_j}(\gamma)\nonumber\\
&\hspace{1in}=\lim_{j\rightarrow\infty}\int^t_0\left(\int_{\Gamma}\int_{\Gamma}h(\gamma(t))W'(\gamma(s)-\xi(s))d\eta^{k_j}(\gamma)d\eta^{k_j}(\xi)\right)ds\nonumber\\
&\hspace{1in}=\int^t_0\left(\int_{\Gamma}\int_{\Gamma}h(\gamma(t))W'(\gamma(s)-\xi(s))d\eta^{\infty}(\gamma)d\eta^{\infty}(\xi)\right)ds\nonumber\\
&\hspace{1in} =\int_{\Gamma}\left(\int^t_0W'*(e_s{_\#}\eta^{\infty})(\gamma(s))ds\right)h(\gamma(t))d\eta^{\infty}(\gamma)
\end{align}
for each $t\ge 0$. 
In a very similar way, we conclude 
\begin{align}\label{part6no3}
&\lim_{j\rightarrow\infty}\int^{t_2}_{t_1}\int_{\Gamma}\left(\int^t_0W'*(e_s{_\#}\eta^{k_j})(\gamma(s))ds\right)h(\gamma(t))d\eta^{k_j}(\gamma)dt\\
&\hspace{1in}=\int^{t_2}_{t_1}\int_{\Gamma}\left(\int^t_0W'*(e_s{_\#}\eta^{\infty})(\gamma(s))ds\right)h(\gamma(t))d\eta^{\infty}(\gamma)dt.
\end{align}
Putting this limit together with \eqref{part6no1} and \eqref{part6no2} allows us to send $j\rightarrow\infty$ in \eqref{EtaConsMomjay} and find 
\begin{align}
&\int^{t_2}_{t_1}\left(\int_{\Gamma}\dot\gamma(t)h(\gamma(t))d\eta^{\infty}(\gamma)\right)dt\\
&\hspace{1in} =\int^{t_2}_{t_1}\left(\int_{\Gamma}\left(v_0(\gamma(0)) -\int^t_0W'*(e_s{_\#}\eta^{\infty})(\gamma(s))ds\right)h(\gamma(t))d\eta^{\infty}(\gamma)\right)dt.
\end{align}
\\\\
\underline{Proof of $(vii)$}.  Let us set 
$$
E^j(t):=\int_{\Gamma}\frac{1}{2}\dot\gamma(t+)^2d\eta^{k_j}(\gamma)+\frac{1}{2}\int_{\Gamma}\int_{\Gamma}W(\gamma(t)-\xi(t))d\eta^{k_j}(\gamma)d\eta^{k_j}(\xi)
$$
for $j\in \N$ and every $t\ge 0$. We will show 
\be\label{PointwiseBoundEj}
\sup_{j\in \N}\max_{t\in [t_1,t_2]}|E^j(t)|<\infty
\ee
and
\be\label{WeakConvEj}
\lim_{j\rightarrow\infty}\int^{t_2}_{t_1}E^j(t)dt= \int^{t_2}_{t_1}E^\infty(t)dt
\ee
for each $t_1\le t_2$, where 
$$
E^\infty(t):=\int_{\Gamma}\frac{1}{2}\dot\gamma(t)^2d\eta^{\infty}(\gamma)+\frac{1}{2}\int_{\Gamma}\int_{\Gamma}W(\gamma(t)-\xi(t))d\eta^{\infty}(\gamma)d\eta^{\infty}(\xi)
$$
for almost every $t\ge 0$. As $(E^j)_{j\in \N}$ is a sequence of nonincreasing functions (by \eqref{ContNonincreaseEnergy}) which is uniformly bounded on each compact subinterval of $[0,\infty)$, Helly's selection theorem implies $\overline{E}(t):=\lim_{j\rightarrow\infty}E^j(t)$ exists for all $t\ge 0$.  Clearly $\overline{E}$ is nonincreasing. By \eqref{WeakConvEj}, $E^\infty(t)=\overline{E}(t)$ for almost every $t\ge 0$.  We then would conclude that $\eta^\infty$ satisfies $(vii)$.

\par In order prove \eqref{PointwiseBoundEj}, we first note that since $W$ is semiconvex and $W'$ grows at most linearly, $W$ grows at most quadratically. In particular, there is a constant $C_3\ge 0$ for which
\be\label{WxyBound}
|W(x-y)|\le C_3(1+x^2+y^2).
\ee
Thus 
\begin{align*}
E^j(t)&\le E^j(0)\\
&= \int_{\R}\frac{1}{2}v_0(x)^2d\rho^k_0(x)+\frac{1}{2}\int_{\R}\int_{\R}W(x-y)d\rho^k_0(x)d\rho^k_0(y)\\
&\le B+\frac{C_3}{2}\int_{\R}\int_{\R}(1+x^2+y^2)d\rho^k_0(x)d\rho^k_0(y)\\
&\le B+C_3\left(\frac{1}{2}+\int_{\R}x^2d\rho^k_0(x)\right)\\
&\le B+C_3(1/2+B)
\end{align*}
for each $t\ge 0$.  Moreover, 
\begin{align*}
E^j(t)&\ge \frac{1}{2}\int_{\Gamma}\int_{\Gamma}W(\gamma(t)-\xi(t))d\eta^{k_j}(\gamma)d\eta^{k_j}(\xi)\\
&\ge -\frac{C_3}{2}\int_{\Gamma}\int_{\Gamma}(1+\gamma(t)^2+\xi(t)^2)d\eta^{k_j}(\gamma)d\eta^{k_j}(\xi)\\
&\ge -C_3\left(\frac{1}{2}+\int_{\Gamma}\gamma(t)^2d\eta^{k_j}(\gamma)\right)
\end{align*}
In view of \eqref{PointwiseBoundTrue}, $\int_{\Gamma}\gamma(t)^2d\eta^{k_j}(\gamma)$ is bounded above independently of 
$j\in \N$ and $t\in[t_1,t_2]$.  These upper and lower bounds together prove \eqref{PointwiseBoundEj}. 

\par In order to show \eqref{WeakConvEj}, we note $\gamma\mapsto \gamma(t)^2$ is uniformly integrable and $|W(\gamma(t)-\xi(t))|\le C_3(1+\gamma(t)^2+\xi(t)^2)$. It follows that
$$
\lim_{j\rightarrow\infty}\int_{\Gamma}\int_{\Gamma}W(\gamma(t)-\xi(t))d\eta^{k_j}(\gamma)d\eta^{k_j}(\xi)
=\int_{\Gamma}\int_{\Gamma}W(\gamma(t)-\xi(t))d\eta(\gamma)d\eta(\xi).
$$
It is also straightforward to combine \eqref{PointwiseBoundTrue} with \eqref{WxyBound} to show that the function $t\mapsto \int_{\Gamma}\int_{\Gamma}W(\gamma(t)-\xi(t))d\eta^{k_j}(\gamma)d\eta^{k_j}(\xi)$ is uniformly bounded for on the interval $[t_1,t_2]$. Dominated convergence then implies 
\be\label{ConvInteractionPot}
\lim_{j\rightarrow\infty}\int^{t_2}_{t_1}\int_{\Gamma}\int_{\Gamma}W(\gamma(t)-\xi(t))d\eta^{k_j}(\gamma)d\eta^{k_j}(\xi)dt
=\int^{t_2}_{t_1}\int_{\Gamma}\int_{\Gamma}W(\gamma(t)-\xi(t))d\eta(\gamma)d\eta(\xi)dt.
\ee

\par Also observe that since $\eta^{k_j}$ satisfy property $(v)$ and $(vi)$,
\begin{align*}
&\int_{\Gamma}\left(\int^{t_2}_{t_1}\dot\gamma(t)^2dt\right)d\eta^{k_j}(\gamma)\\
&\quad=\int^{t_2}_{t_1}\left(\int_{\Gamma}\dot\gamma(t)^2d\eta^{k_j}(\gamma)\right)dt\\
&\quad=\int^{t_2}_{t_1}\left(\int_{\Gamma}\left(v_0(\gamma(0)) -\int^t_0W'*(e_s{_\#}\eta^{k_j})(\gamma(s))ds\right)\dot\gamma(t)d\eta^{k_j}(\gamma)\right)dt\\
&\quad=\int_{\Gamma}\left(\int^{t_2}_{t_1}\left(v_0(\gamma(0)) -\int^t_0W'*(e_s{_\#}\eta^{k_j})(\gamma(s))ds\right)\dot\gamma(t)dt\right)d\eta^{k_j}(\gamma)\\
&\quad=\int_{\Gamma}\left\{\left.\left(v_0(\gamma(0)) -\int^t_0W'*(e_s{_\#}\eta^{k_j})(\gamma(s))ds\right)\gamma(t)\right|^{t_2}_{t_1}\right.\\
&\hspace{2in} \left.+\int^{t_2}_{t_1}\gamma(t)W'*(e_t{_\#}\eta^{k_j})(\gamma(t))dt\right\}d\eta^{k_j}(\gamma)
\end{align*} 
The limit
\be
\lim_{j\rightarrow\infty}\left.\int_{\Gamma}v_0(\gamma(0))\gamma(t)\right|^{t_2}_{t_1}d\eta^{k_j}(\gamma)=
\left.\int_{\Gamma}v_0(\gamma(0))\gamma(t)\right|^{t_2}_{t_1}d\eta^{\infty}(\gamma)
\ee
follows from \eqref{part7no1} while 
\begin{align*}
&\lim_{j\rightarrow\infty}\left.\int_{\Gamma}\left(\int^t_0W'*(e_s{_\#}\eta^{k_j})(\gamma(s))ds\right)\gamma(t)\right|^{t_2}_{t_1}d\eta^{k_j}(\gamma)\\
&\hspace{1in}=
\left.\int_{\Gamma}\left(\int^t_0W'*(e_s{_\#}\eta^{\infty})(\gamma(s))ds\right)\gamma(t)\right|^{t_2}_{t_1}d\eta^{\infty}(\gamma)
\end{align*}
in a consequence of \eqref{part7no2}. Once we write, 
\be
\int_{\Gamma}\left(\int^{t_2}_{t_1}\gamma(t)W'*(e_t{_\#}\eta^{k_j})(\gamma(t))dt\right)d\eta^{k_j}(\gamma)
=\int^{t_2}_{t_1}\int_{\Gamma}\int_{\Gamma}\gamma(t)W'(\gamma(t)-\xi(t))d\eta^{k_j}(\gamma)d\eta^{k_j}(\xi),
\ee
we see that the limit 
\begin{align*}
\lim_{j\rightarrow\infty}\int_{\Gamma}\left(\int^{t_2}_{t_1}\gamma(t)W'*(e_t{_\#}\eta^{k_j})(\gamma(t))dt\right)d\eta^{k_j}(\gamma)
=\int_{\Gamma}\left(\int^{t_2}_{t_1}\gamma(t)W'*(e_t{_\#}\eta^{\infty})(\gamma(t))dt\right)d\eta^{\infty}(\gamma)
\end{align*}
follows from a minor variation of our proof of \eqref{part7no2}. 

\par As a result, 
\begin{align}
&\lim_{j\rightarrow\infty}\int^{t_2}_{t_1}\left(\int_{\Gamma}\dot\gamma(t)^2d\eta^{k_j}(\gamma)\right)dt\\
&\quad=\lim_{j\rightarrow\infty}\int_{\Gamma}\left\{\left.\left(v_0(\gamma(0)) -\int^t_0W'*(e_s{_\#}\eta^{k_j})(\gamma(s))ds\right)\gamma(t)\right|^{t_2}_{t_1}\right.\\
&\hspace{2in} \left.+\int^{t_2}_{t_1}\gamma(t)W'*(e_t{_\#}\eta^{k_j})(\gamma(t))dt\right\}d\eta^{k_j}(\gamma)\\
&\quad=\int_{\Gamma}\left\{\left.\left(v_0(\gamma(0)) -\int^t_0W'*(e_s{_\#}\eta^{\infty})(\gamma(s))ds\right)\gamma(t)\right|^{t_2}_{t_1}\right.\\
&\hspace{2in} \left.+\int^{t_2}_{t_1}\gamma(t)W'*(e_t{_\#}\eta^{\infty})(\gamma(t))dt\right\}d\eta^{\infty}(\gamma)\\
&\quad=\int^{t_2}_{t_1}\left(\int_{\Gamma}\dot\gamma(t)^2d\eta^{\infty}(\gamma)\right)dt.
\end{align}
This limit combined with \eqref{ConvInteractionPot} implies \eqref{WeakConvEj}, as desired.
\end{proof}

\section{Solutions to the pressureless Euler system}\label{CompactSect}
This section is dedicated to the proof of Theorem \ref{mainThm}.  So we assume $\rho_0\in {\cal P}(\R)$ with $\int_{\R}x^2d\rho_0(x)<\infty$, $v_0:\R\rightarrow\R$ is absolutely continuous, $W(x)+(c/2)x^2$ is convex and $|W'(x)|$ grows at most linearly as $|x|\rightarrow\infty$.  
According to Theorem \ref{EtaThm}, there is $\eta\in {\cal P}(\Gamma)$ which satisfies parts $(i)-(vii)$ of that statement.  We will simply 
refer to these parts by their respective numbers below. 

\par Let us define 
$$
\rho: (0,\infty)\rightarrow {\cal P}(\R); t\mapsto e_t{_\#}\eta
$$
and select a Borel $v: \R\times (0,\infty)\rightarrow \R$ from part $(vi)$.  By $(i)$, $(ii)$, and $(v)$, 
\begin{align*}
\int^\infty_0\int_{\R}(\partial_t\phi+v\partial_x\phi )d\rho_tdt
&=\int^\infty_0 \int_{\Gamma}(\partial_t\phi(\gamma(t),t)+ v(\gamma(t),t)\partial_x\phi(\gamma(t),t))d\eta(\gamma)dt\\
&=\int^\infty_0 \int_{\Gamma}(\partial_t\phi(\gamma(t),t)+ \dot\gamma(t)\partial_x\phi(\gamma(t),t))d\eta(\gamma)dt\\
&=\int_{\Gamma}\int^\infty_0\frac{d}{dt}\phi(\gamma(t),t)dt d\eta(\gamma)\\
&=-\int_{\Gamma}\phi(\gamma(0),0)d\eta(\gamma)\\
&=-\int_{\R}\phi(\cdot,0)d\rho_0
\end{align*}
for any $\phi\in C^\infty_c(\R\times[0,\infty))$; and in view of $(vi)$,
\begin{align*}
&\int^\infty_0\int_\R(v\partial_t\phi+v^2\partial_x\phi)d\rho_tdt\\
& =\int^\infty_0\int_{\Gamma}\left[\partial_t\phi(\gamma(t),t)+v(\gamma(t),t)\partial_x\phi(\gamma(t),t)\right]v(\gamma(t),t)d\eta(\gamma)dt\\
& =\int^\infty_0\int_{\Gamma}\underbrace{\left[\partial_t\phi(\gamma(t),t)+v(\gamma(t),t)\partial_x\phi(\gamma(t),t)\right]}_{\frac{d}{dt}\phi(\gamma(t),t)}\dot\gamma(t)d\eta(\gamma)dt\\
& =\int^\infty_0\int_{\Gamma}\frac{d}{dt}\phi(\gamma(t),t)\left[v_0(\gamma(0)) -\int^t_0W'*(e_{s}{_\#}\eta)(\gamma(s))ds   \right]d\eta(\gamma)dt\\
& =\int_{\Gamma}\int^\infty_0\frac{d}{dt}\phi(\gamma(t),t)\left[v_0(\gamma(0)) -\int^t_0W'*(e_{s}{_\#}\eta)(\gamma(s))ds   \right]dtd\eta(\gamma)\\
& =\int^\infty_0\int_{\Gamma}\phi(\gamma(t),t)W'*(e_{t}{_\#}\eta)(\gamma(t)) d\eta(\gamma)dt-\int_{\Gamma}\phi(\gamma(0),0)v_0(\gamma(0))d\eta(\gamma)\\
&=\int^\infty_0\int_\R \phi\left(W'*\rho_t\right)d\rho_tdt-\int_{\R}\phi(\cdot,0)v_0d\rho_0.
\end{align*}
As a result, $\rho$ and $v$ is a weak solution pair of the pressureless Euler equations which satisfies the initial conditions $\rho|_{t=0}=\rho_0$ and $v|_{t=0}=v_0$.

\par A consequence of $(v)$ is that for almost every $t>0$, 
\be\label{vODE}
\dot\gamma(t)=v(\gamma(t),t)
\ee
for $\eta$ almost every $\gamma\in \Gamma$.  Combining this with part $(vii)$ gives
\begin{align*}
&\frac{1}{2}\int_\R v(x,t)^2d\rho_t(x)+\frac{1}{2}\int_\R\int_\R W(x-y)d\rho_t(x)d\rho_t(y)\\
&\quad =\int_{\Gamma}\frac{1}{2}\dot\gamma(t)^2d\eta(\gamma)+\frac{1}{2}\int_{\Gamma}\int_{\Gamma}W(\gamma(t)-\xi(t))d\eta(\gamma)d\eta(\xi)\\
&\quad \le \int_{\Gamma}\frac{1}{2}\dot\gamma(s)^2d\eta(\gamma)+\frac{1}{2}\int_{\Gamma}\int_{\Gamma}W(\gamma(s)-\xi(s))d\eta(\gamma)d\eta(\xi)\\
&\quad = \frac{1}{2}\int_\R v(x,s)^2d\rho_s(x)+\frac{1}{2}\int_\R\int_\R W(x-y)d\rho_s(x)d\rho_s(y)
\end{align*}
for almost every $0\le s\le t$. This proves \eqref{EnergyIneq}.

\par Fix such a time $t>0$ and choose a Borel 
subset $S\subset \Gamma$ such that \eqref{vODE} holds for each $\gamma\in S$ and $\eta(S)=1$.  By $(iii)$, 
\be
\left.\frac{d}{ds}\frac{(\gamma(s)-\xi(s))^2}{s^2}\right|_{s=t}=\frac{2}{t^2}\left((v(\gamma(t),t)-v(\xi(t),t))(\gamma(t)-\xi(t))-\frac{1}{t}(\gamma(t)-\xi(t))^2\right)\le 0
\ee
for $\gamma,\xi\in S$.   Thus, 
\be\label{weakEntropy}
(v(x,t)-v(y,t))(x-y)\le \frac{1}{t}(x-y)^2,\quad x,y\in e_t(S).
\ee

For each $\epsilon>0$, 
we may also select a closed $F\subset S$ such that $\eta(S\setminus F)\le \epsilon$ (Theorem 1.1 \cite{Billingsley}).
Observe 
\be
e_t(F\cap\{\Psi<\infty\})=\bigcup^\infty_{m=1}e_t(F\cap\{\Psi\le m\} ).
\ee
Since $\Psi$ has compact sublevel sets, $F\cap\{\Psi\le m\}$ is compact and so $e_t(F\cap\{\Psi<\infty\})$ is Borel. As $\eta(\{\Psi<\infty\})=1$ and $\eta(F)\ge 1-\epsilon$, 
\be\label{rhoteeoneminuseps}
\rho_t(e_t(F\cap\{\Psi<\infty\}))=\eta\left(e_t^{-1}\left(e_t(F\cap\{\Psi<\infty\})\right)\right)\ge \eta\left(F\cap\{\Psi<\infty\}\right)\ge 1-\epsilon.
\ee
Combining this inequality with \eqref{weakEntropy} gives
\begin{align}
1 &\ge \left(\rho_t\times\rho_t\right)\left(\left\{(x,y)\in \R\times \R:(v(x,t)-v(y,t))(x-y)\le \frac{1}{t}(x-y)^2 \right\}\right)\\
   &\ge \left(\rho_t\times\rho_t\right)\left(e_t(F\cap\{\Psi<\infty\})\times e_t(F\cap\{\Psi<\infty\}) \right)\\
   &\ge \left(\rho_t(e_t(F\cap\{\Psi<\infty\}))\right)^2\\
   &\ge (1-\epsilon)^2.
\end{align}

\par Since $\epsilon>0$ is arbitrary,
\begin{align}
1&=\left(\rho_t\times\rho_t\right)\left(\left\{(x,y)\in \R\times \R:(v(x,t)-v(y,t))(x-y)\le \frac{1}{t}(x-y)^2 \right\}\right)\\
&=\int_{\R}\rho_t\left(\left\{x\in \R:(v(x,t)-v(y,t))(x-y)\le \frac{1}{t}(x-y)^2 \right\}\right)d\rho_t(y).
\end{align}
Thus, for $\rho_t$ almost every $y\in \R$
$$
\rho_t\left(\left\{x\in \R:(v(x,t)-v(y,t))(x-y)\le \frac{1}{t}(x-y)^2 \right\}\right)=1.
$$
That is, for $\rho_t$ almost every every $x,y\in \R$,
$$
(v(x,t)-v(y,t))(x-y)\le \frac{1}{t}(x-y)^2.
$$

\appendix

\section{Newton's equations}\label{NewtonSec}
Here we show that the ODE system \eqref{NewtonSystem} has a solution on the interval $[0,\infty)$ for prescribed initial conditions.   We recall the standing assumptions that $W:\R\rightarrow\R$ is continuously differentiable, $W$ is even, and that \eqref{Wsemiconvex} holds.  
\begin{prop}\label{ExistenceODE}
Suppose $m_1,\dots, m_N> 0$, $x_1,\dots, x_N\in \R$ and $v_1,\dots, v_N\in \R$. There are
$$
\gamma_1,\dots,\gamma_N\in C^2([0,\infty))
$$
satisfying 
\be\label{NewtonSystem2}
\ddot \gamma_i(t)=-\sum^N_{j=1}m_jW'(\gamma_i(t)-\gamma_j(t))
\ee
for  $t>0$ and
\be\label{NewtonSystemInit}
\gamma_i(0)=x_i\quad \text{and}\quad \dot\gamma_i(0+)=v_i
\ee
for $i=1,\dots, N$.  
\end{prop}

\begin{proof}
By Peano's existence theorem, there is a solution $\gamma_1,\dots,\gamma_N\in C^2([0,T))$ of the ODE \eqref{NewtonSystem2} for some $T\in (0,\infty]$ which satisfies the initial conditions \eqref{NewtonSystemInit}. We may assume that $[0,T)$ is the maximal interval of existence so that this solution cannot be continued to a larger interval if $T<\infty$. In this case, it must be that 
\be\label{GammaiCantHold}
\sup_{t\in [0,T)}|\dot \gamma_j(t)|=\infty
\ee
for some $j=1,\dots, N$. Otherwise, $\sup_{t\in [0,T)}| \dot\gamma_i(t)|<\infty$ and 
$$
\sup_{t\in [0,T)}| \gamma_i(t)|\le|x_i| +T \sup_{t\in [0,T)}| \dot\gamma_i(t)|<\infty
$$
for all $i=1,\dots, N$ and this solution $\gamma_1,\dots,\gamma_N$ could then be continued to $[0,T+\epsilon)$ for some $\epsilon>0$ (Chapter 1 of \cite{MR587488}). 

\par Observe 
\be\label{ConsEnergy}
\frac{1}{2}\sum^N_{i=1}m_i\dot\gamma_i(t)^2+
\frac{1}{2}\sum^N_{i,j=1}m_im_jW(\gamma_i(t)-\gamma_j(t))=\frac{1}{2}\sum^N_{i=1}m_i v_i^2+
\frac{1}{2}\sum^N_{i,j=1}m_im_jW(x_i-x_j)
\ee 
for $t\in [0,T)$. This can be verified by differentiating the left hand side of \eqref{ConsEnergy} and by using that $\gamma_1,\dots, \gamma_N$ solves \eqref{NewtonSystem2}.   Arguing as we did to prove Corollary \ref{discreteEnergyEstimates}, we find 
\be
\sum^N_{i=1}m_i\dot\gamma_i(t)^2ds\le \varphi'(t)\;
\left(\sum^N_{i=1}m_iv_0(x_i)^2 +\frac{1}{2}\sum^N_{i,j=1}m_im_jW'(x_i-x_j)^2 \right)
\ee
for $t\in [0,T)$. As $m_i>0$ for each $i=1,\dots, N$, \eqref{GammaiCantHold} could not hold for any $j=1,\dots, N$. We conclude that $T=\infty$. 

\end{proof}

\bibliography{EPbib}{}

\bibliographystyle{plain}

\end{document}